\newtheorem{theorem}{Theorem}[section]
\newtheorem{lemma}[theorem]{Lemma}
\newtheorem{corollary}[theorem]{Corollary}
\newtheorem{example}[theorem]{Example}
\newtheorem{remark}[theorem]{Remark}
\newcommand\cN{\mathcal{N}}
\newcommand\bC{\mathbb{C}}
\newcommand\myRe{\operatorname{Re}}
\newcommand\myIm{\operatorname{Im}}
\newcommand\ran{\operatorname{ran}}
\newcommand\dom{\operatorname{dom}}
\newcommand\sumI{\sideset{}{^{(1)}}\sum}
\begin{document}

\title[Rank-one perturbations]
{Spectra of rank-one perturbations of self-adjoint operators}

\author[O.~Dobosevych]{Oles Dobosevych} 
\address{Ukrainian Catholic University, 2a Kozelnytska str, 79026 Lviv, Ukraine}
\email{dobosevych@ucu.edu.ua}

\author[R.~Hryniv]{Rostyslav Hryniv}
\address{Ukrainian Catholic University, 2a Kozelnytska str, 79026 Lviv, Ukraine \and University of Rzesz\'{o}w\\ 1\, Pigonia str.\\ 35-959 Rzesz\'{o}w, Poland}
\email{rhryniv@ucu.edu.ua, rhryniv@ur.edu.pl}

\subjclass[2010]{Primary: 47A55,  Secondary: 47A10, 15A18, 15A60}%
\keywords{Operators, rank-one perturbations, non-simple eigenvalues, Jordan chains}%

\date{\today}%

\begin{abstract}
	We characterize possible spectra of rank-one perturbations~$B$ of a self-adjoint operator $A$ with discrete spectrum and, in particular, prove that the spectrum of~$B$ may include any number of real or non-real eigenvalues of arbitrary algebraic multiplicity
\end{abstract}

\maketitle

\section{Introduction}

Assume that $H$ is a separable Hilbert space with inner product $\langle \, \cdot \,,\, \cdot \, \rangle$ and consider a self-adjoint operator~$A$ with simple discrete spectrum acting in~$H$. Our aim is to study spectral properties of the rank one perturbations of~$A$, i.e., of the operators $B$ of the form
\[
    B=A + \langle \cdot, \varphi \rangle \psi,
\]
where $\varphi$ and $\psi$ are nonzero elements of $H$.

Rank-one perturbations of operators and matrices have been actively studied in both mathematical and physical literature for the reason that, on the one hand, they are simple enough to allow description of the spectral properties of perturbed operators via closed-form formulae which then can be analysed using various techniques; on the other hand, such perturbations turn out to be general enough to produce various non-trivial effects. 

One of the most general results in a finite-dimensional setting is given by Krupnik~\cite{Kru92} and states that a rank-one perturbation of an $n\times n$ matrix~$A$ can possess an arbitrary spectrum counting multiplicity. In other words, given any natural number $k$, any pairwise distinct complex numbers~$z_1, z_2, \dots, z_k$, and any natural numbers $m_1, m_2, \dots, m_k$ satisfying $m_1+ m_2 + \dots + m_k =n$, there is a rank-one perturbation~$B$ of~$A$ whose spectrum consists of the points $z_1, z_2, \dots, z_k$ of the corresponding algebraic multiplicities $m_1,m_2, \dots, m_n$. This statement is also specialized to cases when both~$A$ and the perturbed matrix~$B$ belong to the Hermitian, unitary, or normal classes. Savchenko~\cite{Sav03} studies the effect a generic rank-one perturbation has on the Jordan structure of a matrix~$A$; an interesting observation is that, typically, in each root subspace, only the Jordan chain of the largest length splits; in~\cite{Sav04}, this is further generalized to low-rank perturbations, cf.\ also~\cite{MorDop03}. Similar results in infinite-dimensional Banach spaces were earlier derived by H\"ormander and Melin in~\cite{HorMel94}. Bounds on the number of distinct eigenvalues of~$B$ in terms of some spectral characteristics of~$A$ are established in~\cite{Far16}.

Structured perturbations of matrices and matrix pencils have recently been thoroughly studied in a series of papers by Mehl a.o.~\cite{MehMehRanRod11, MehMehRanRod12, MehMehRanRod13, MehMehRanRod14, MehMehRanRod16, MehMehWoj17, SosMorMeh20}. Changes in the Jordan structures under perturbation within the classes of complex $J$-Hamiltonian and $H$-symmetric matrices and application in the control theory
is discussed in~\cite{MehMehRanRod11}; see~\cite{MehMehRanRod13, MehMehRanRod16} for further treatment in both the real and complex case. The class of $H$-Hermitian matrices, with (skew-)Hermitian $H$, is studied in~\cite{MehMehRanRod12, MehMehRanRod14} via the canonical form of the pair $(B, H)$. Rank-one perturbations of matrix pencils are discussed e.g.\ in~\cite{MehMehWoj17, GerTru17, BarRoc20}. A general perturbation theory for structured matrices is developed in the recent paper~\cite{SosMorMeh20}.

The above results typically exploit essentially matrix methods and thus are not directly applicable to the infinite-dimensional case (see, however, \cite{HorMel94}). Rank-one perturbations of bounded or unbounded operators in infinite-dimensional Hilbert spaces have been studied within the general operator theory. For instance, a comprehensive spectral theory for rank-one perturbations of unbounded operators in the self-adjoint case is developed in~\cite{Sim95}, where a detailed characterization of discrete, absolutely continuous, and singlularly continuous components of the spectrum of the perturbed operator is given. A thorough overview of the theory of singular point perturbations of Schr\"odinger operators (formally corresponding to additive Dirac delta-functions and their derivatives) is given in the monographs by Albeverio a.o.~\cite{AGHH, AlbKur00}. There has been much work devoted to the so-called singular and super-singular rank-one perturbations of self-adjoint operators, where the functions $\varphi$ and $\psi$ belong to the scales of Hilbert spaces $\operatorname{dom}(A^\alpha)$ with negative~$\alpha$, see e.g.~\cite{AlbKosKurNiz03, AlbKonKos05, AlbKos99, AlbKuzNiz08, Gol18, Kur04, KurLugNeu19, KuzNiz06, DudVdo16}; in this case, a typical approach is through the Krein extension theory of self-adjoint operators. Rank-one and finite-rank  perturbations of self-adjoint operators in Krein spaces have been recently discussed in e.g.~\cite{BehMoeTru14, BehLebPerMoeTru16}.

Despite the extensive research in the area, there seems to be no complete infinite-dimensional generalization of the results by Krupnik~\cite{Kru92}. The most pertinent work we are aware of include the papers by H\"ormader and Melin~\cite{HorMel94} and by Behrndt a.o.~\cite{BehLebPerMoeTru15}, which characterize possible changes in the Jordan structure of root subspaces of linear mappings in infinite-di\-men\-si\-o\-nal linear vector spaces under general finite-rank perturbations. 

Our motivation in this work was to understand how the spectrum of an operator in an infinite-dimensional Hilbert space can change under a rank-one perturbation, both locally, i.e., on the level of root subspaces, and globally, i.e., on the level of eigenvalue asymptotics. This task is quite non-trivial even in the case when the unperturbed operator~$A$ is self-adjoint but has generic spectrum, cf.~\cite{Sim95}. Therefore, we decided to start with deriving  a complete spectral picture in the simplest case where the unperturbed operator~$A$ is self-adjoint and has simple discrete spectrum. Under this assumption, our main result (Theorem~\ref{thm:main}) shows that the rank-one perturbation~$B$ of~$A$ may get eigenvalues of arbitrary algebraic multiplicity in an arbitrary finite set of points; however, all sufficiently large eigenvalues remain simple and asymptotically close to the eigenvalues of~$A$. In the finite-dimensional case, our analysis leads to an extension of the result by Krupnik~\cite{Kru92}; Theorem~\ref{thm:finite-dim} states that one of the vectors~$\varphi$ or $\psi$ can be fixed arbitrarily in a ``generic'' set, and then one can find the other vector such that the perturbed matrix $B$  possesses the prescribed spectrum and, moreover, such choice is unique. We also specify this result in Theorem~\ref{thm:phi-arbitrary} to the case when $\varphi$ or $\psi$ is fixed arbitrarily. We also note that a complete characterization of the possible spectra of rank-one perturbations of self-adjoint operators in Hilbert space, including precise asymptotic distribution and the constructive algorithm of finding~$\varphi$ and $\psi$, is suggested in a subsequent paper~\cite{DobHry20}.

The structure of the paper is as follows. In the next section, we introduce the characteristic function of the perturbed operator~$B$ and discuss how it is related to its spectrum. In Section~\ref{sec:mult}, the algebraic multiplicities of eigenvalues are discussed, and in  Section~\ref{sec:asympt}, the asymptotic distribution of eigenvalues is established. In Section~\ref{sec:finite-dim}, we specialize the obtained results to the finite-dimensional case, and in the final section we discuss possible generalizations of the main results to wider classes of the operators~$A$.



\section{General spectral properties of $B$}\label{sec:general}


Throughout the paper, we make the following standing assumption on the operator~$A$:
\begin{itemize}
  \item[(A1)] the operator~$A$ is self-adjoint and has simple discrete spectrum.
\end{itemize}
The operator~$A$ is necessarily unbounded above or/and below; clearly, by considering $-A$ in place of~$A$, we reduce the case when $A$ is bounded above to the case when it is bounded below. Therefore, under assumption~(A1), the spectrum of~$A$ consists of real simple eigenvalues that can be listed in increasing order as $\lambda_n$, $n\in I$, with the index set~$I$ equal to~$\mathbb{N}$ in the case where $A$ is bounded below and to~$\mathbb{Z}$ otherwise.  

The operator~$B$ is a rank one perturbation of the operator~$A$, i.e.,
\begin{equation}\label{eq:B}
    B=A + \langle \cdot, \varphi \rangle \psi
\end{equation}
with fixed nonzero vectors~$\varphi$ and $\psi$ in~$H$. Clearly, the operator~$B$
is well defined and closed on its natural domain $\dom (B)$ equal to $\dom (A)$.
Next, for $\lambda$ in the resolvent set $\rho(A)$ of~$A$, we introduce the \emph{characteristic function}
\begin{equation}\label{eq:F}
    F(\lambda) := \langle (A-\lambda)^{-1}\psi, \varphi \rangle + 1
\end{equation}
and denote by $\mathcal{N}_F$ the set of zeros of $F$.
Many spectral properties of the operator~$B$ of~\eqref{eq:B} will be derived from the explicit formula for its resolvent known as the Krein formula~(see, e.g., \cite[Sec. 1.1.1]{AlbKur00}); we include its proof for the sake of completeness and to derive some explicit relations to be used later on.

\begin{lemma}[The Krein formula]\label{lm:krein}
The set $\rho(A)\setminus \cN_F$ consists of resolvent points of the operator~$B$ and, for every~$\lambda\in\rho(A)\setminus \cN_F$,
\begin{equation}\label{eq:Krein}
    (B - \lambda)^{-1}
        = (A - \lambda)^{-1} - \frac{\langle \, \cdot \,, (A - \overline{\lambda})^{-1} \varphi \rangle}{F(\lambda)} \:
        (A-\lambda)^{-1} \psi.
\end{equation}
\end{lemma}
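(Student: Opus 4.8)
The plan is to guess the resolvent and verify it directly. Denote by $R_\lambda$ the operator on the right-hand side of~\eqref{eq:Krein}. Since $(A-\lambda)^{-1}f$ and $(A-\lambda)^{-1}\psi$ both lie in $\dom(A)=\dom(B)$, the operator $R_\lambda$ maps $H$ into $\dom(B)$; it is bounded, being the sum of the bounded operator $(A-\lambda)^{-1}$ and a rank-one operator. It therefore suffices to check that $R_\lambda$ is a two-sided inverse of $B-\lambda$, for then $\lambda\in\rho(B)$ and $(B-\lambda)^{-1}=R_\lambda$.

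First I would compute $(B-\lambda)R_\lambda f$ for arbitrary $f\in H$. Writing $B-\lambda=(A-\lambda)+\langle\,\cdot\,,\varphi\rangle\psi$ and applying it to $R_\lambda f$, the operator $A-\lambda$ cancels the resolvents in the two summands of $R_\lambda f$ and produces $f-\bigl(\langle f,(A-\overline\lambda)^{-1}\varphi\rangle/F(\lambda)\bigr)\psi$, while the rank-one term contributes $\langle R_\lambda f,\varphi\rangle\psi$. To evaluate the scalar $\langle R_\lambda f,\varphi\rangle$ I would use two facts: self-adjointness of $A$ gives $\bigl((A-\lambda)^{-1}\bigr)^*=(A-\overline\lambda)^{-1}$, hence $\langle(A-\lambda)^{-1}f,\varphi\rangle=\langle f,(A-\overline\lambda)^{-1}\varphi\rangle$; and the definition~\eqref{eq:F} gives $\langle(A-\lambda)^{-1}\psi,\varphi\rangle=F(\lambda)-1$. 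A short algebraic simplification then yields $\langle R_\lambda f,\varphi\rangle=\langle f,(A-\overline\lambda)^{-1}\varphi\rangle/F(\lambda)$, so the $\psi$-terms cancel and $(B-\lambda)R_\lambda f=f$. This already shows that $B-\lambda$ is surjective and $R_\lambda$ is a right inverse.

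To conclude, I would establish injectivity of $B-\lambda$ on $\dom(A)$ — the only step where the hypothesis $\lambda\notin\cN_F$ genuinely enters. If $(B-\lambda)g=0$, then $(A-\lambda)g=-\langle g,\varphi\rangle\psi$; the case $\langle g,\varphi\rangle=0$ forces $g=0$ because $\lambda\in\rho(A)$, while if $\langle g,\varphi\rangle\ne0$ then $g=-\langle g,\varphi\rangle(A-\lambda)^{-1}\psi$, and pairing with $\varphi$ gives $1=-(F(\lambda)-1)$, i.e.\ $F(\lambda)=0$, contradicting $\lambda\notin\cN_F$. Hence $B-\lambda$ is a bijection of $\dom(B)$ onto $H$ with bounded inverse $R_\lambda$. (Alternatively, one may verify $R_\lambda(B-\lambda)=I$ on $\dom(A)$ by an entirely analogous computation using the same two identities.)

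I do not anticipate a real obstacle here: the argument is a direct verification. The only points requiring a little care are the bookkeeping with adjoints of resolvents via self-adjointness of $A$, the substitution $\langle(A-\lambda)^{-1}\psi,\varphi\rangle=F(\lambda)-1$ from~\eqref{eq:F}, and pinning down where the condition $\lambda\notin\cN_F$ is used — which, as above, is precisely the injectivity of $B-\lambda$ (equivalently, the legitimacy of dividing by $F(\lambda)$).
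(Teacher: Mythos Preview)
Your proof is correct and uses essentially the same ingredients as the paper's. The only difference is organizational: the paper \emph{derives} the formula by starting from $(B-\lambda)f=g$, applying $(A-\lambda)^{-1}$, and solving for $\langle f,\varphi\rangle$, whereas you take the formula as given and \emph{verify} $(B-\lambda)R_\lambda=I$ directly; the injectivity arguments are identical.
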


\begin{proof}
To prove that a fixed $\lambda\in\rho(A)\setminus \cN_F$ is a resolvent point of~$B$, we need to show that for every~$g\in H$ the equation
\begin{equation}\label{eq:krein-1}
    g = (B - \lambda) f
\end{equation}
can be uniquely solved for $f \in H$. Assuming such an~$f$ exists, writing the equality \eqref{eq:krein-1} as
\begin{equation}\label{eq:krein-2}
g = (A - \lambda) f + \langle f, \varphi \rangle \psi,
\end{equation}
and applying the resolvent of the operator $A$ to both sides, we obtain
\begin{equation}\label{eq:krein-3}
    (A - \lambda)^{-1} g
        =  f + \langle f, \varphi \rangle (A - \lambda)^{-1} \psi.
\end{equation}
Taking the inner product with $\varphi$ results in the equality
\begin{equation}\label{eq:krein-4}
    \langle (A - \lambda)^{-1} g, \varphi \rangle
    = \langle f, \varphi \rangle
        + \langle f, \varphi \rangle \langle (A - \lambda) ^ {- 1} \psi, \varphi \rangle
    = \langle f, \varphi \rangle F(\lambda),
\end{equation}
which on account of $F(\lambda)\ne0$ leads to
\begin{equation}\label{eq:krein-5}
    \langle f, \varphi \rangle =
    \frac {\langle (A - \lambda) ^ {-1} g, \varphi \rangle} {F (\lambda)}.
\end{equation}
Substituting now \eqref{eq:krein-5} in \eqref{eq:krein-3}, we derive the following formula for~$f$:
\begin{equation}\label{eq:krein-6}
    f = (A - \lambda)^{-1} g
        - \frac{\langle (A - \lambda)^{-1} g, \varphi \rangle}{F(\lambda)} (A-\lambda)^{-1}\psi.
\end{equation}
A direct verification shows that $f$ of~\eqref{eq:krein-6} belongs to~$\dom(B)=\dom(A)$ and is indeed a solution of equation~\eqref{eq:krein-1}.

Therefore the operator $B - \lambda$ is surjective. It is also injective since if an $f\in \dom(B)$ satisfies~\eqref{eq:krein-1}  with $g = 0$, then~\eqref{eq:krein-3} on account of~\eqref{eq:krein-5} gives~$f=0$. Thus the operator $B - \lambda$ is invertible and its inverse is equal to
$$
(B - \lambda)^{-1} = (A - \lambda)^{-1} - \frac{\langle \, \cdot \,, (A - \overline {\lambda})^{-1} \varphi \rangle} {F(\lambda)} \, (A - \lambda)^{-1} \psi
$$
as claimed. The proof is complete.
\end{proof}

The Krein formula shows that, for every $\lambda \in \rho(A) \setminus \cN_F$, the resolvent~$(B-\lambda)^{-1}$ is a rank one perturbation of the compact operator~$(A-\lambda)^{-1}$. Therefore, we get the following

\begin{corollary}\label{cr:krein}
The resolvent of the operator $B$ is compact, i.e., $B$ is an operator with discrete spectrum.
\end{corollary}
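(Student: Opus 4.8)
The plan is to combine the Krein formula of Lemma~\ref{lm:krein} with the fact that, under assumption~(A1), the resolvent $(A-\lambda)^{-1}$ is compact. First I would check that the set $\rho(A)\setminus\cN_F$ on which formula~\eqref{eq:Krein} is valid is nonempty. Since $\|(A-\lambda)^{-1}\|$ tends to~$0$ as $\lambda$ runs to infinity away from the real axis — for instance along the half-line $\lambda=\mathrm{i}t$ with $t\to+\infty$, where $\|(A-\mathrm{i}t)^{-1}\|\le 1/t$ — we obtain from~\eqref{eq:F} that $F(\lambda)\to 1$, so $F(\lambda)\ne0$ for all sufficiently large such~$\lambda$. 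Fix one such point $\lambda_0\in\rho(A)\setminus\cN_F$.

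Next I would read off from~\eqref{eq:Krein} that
\[
    (B-\lambda_0)^{-1} = (A-\lambda_0)^{-1}
      - \frac{\langle\,\cdot\,,(A-\overline{\lambda_0})^{-1}\varphi\rangle}{F(\lambda_0)}\,(A-\lambda_0)^{-1}\psi .
\]
The second summand has one-dimensional range, spanned by $(A-\lambda_0)^{-1}\psi$, and is therefore compact; the first summand is compact because, by~(A1), $A$ has discrete spectrum and thus compact resolvent (its eigenvalues $(\lambda_n-\lambda_0)^{-1}$ accumulate only at~$0$). Hence $(B-\lambda_0)^{-1}$, being a sum of two compact operators, is compact.

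Finally I would upgrade compactness at the single point~$\lambda_0$ to compactness of the resolvent at every point of~$\rho(B)$ by the standard argument based on the first resolvent identity
\[
    (B-\mu)^{-1} = (B-\lambda_0)^{-1} + (\mu-\lambda_0)(B-\mu)^{-1}(B-\lambda_0)^{-1}, \qquad \mu\in\rho(B),
\]
whose right-hand side lies in the ideal of compact operators, since that ideal is stable under addition and under multiplication by bounded operators. This yields that $B$ has compact resolvent, i.e., discrete spectrum. I do not anticipate a genuine obstacle: the only point that needs care is the nonemptiness of $\rho(A)\setminus\cN_F$ (and, more modestly, the passage from one resolvent point to all of $\rho(B)$), which is why I isolate it as the first step; the remainder is a routine use of the stability properties of the compact-operator ideal.
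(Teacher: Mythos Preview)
Your proposal is correct and follows the same route as the paper, which simply observes (in the sentence preceding the corollary) that by~\eqref{eq:Krein} the resolvent $(B-\lambda)^{-1}$ is a rank-one perturbation of the compact operator~$(A-\lambda)^{-1}$. You have added the verification that $\rho(A)\setminus\cN_F$ is nonempty and the passage to all of $\rho(B)$ via the resolvent identity, both of which the paper leaves implicit.
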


Next we denote by~$v_n$ a normalized eigenvector of~$A$ corresponding to its eigenvalue~$\lambda_n$; then the set~$\{v_n\}_{n\in I}$ is an orthonormal basis of~$H$. We also denote by $a_n$ and $b_n$ the Fourier coefficients of the vectors~$\varphi$ and $\psi$ with respect to this basis, so that%
\begin{footnote}
{In the case $I=\mathbb{Z}$, the summation will always be understood in the principal value sense}
\end{footnote}
\[
    \varphi = \sum_{n\in I} a_n v_n, \qquad \psi = \sum_{n\in I} b_n v_n.
\]

\begin{lemma}\label{lem:eig-B}
The following relations hold between the spectra of the operators $A$ and $B$:
\begin{itemize}
\item[a)]
for $\lambda\in\rho(A)$, $\lambda$ belongs to the spectrum of~$B$ if and only if~$\lambda \in \cN_F$;

\item[b)]
the eigenvalue $\lambda = \lambda_n$ of the operator $A$ belongs to spectrum of the operator $B$ if and only if $a_nb_n = 0$.
\end{itemize}
\end{lemma}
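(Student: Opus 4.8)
The plan is to handle the two parts separately, using the Krein formula from Lemma~\ref{lm:krein} for part a) and a direct computation with the eigenbasis for part b).

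For part a), one direction is immediate: if $\lambda\in\rho(A)\setminus\cN_F$, then by Lemma~\ref{lm:krein} the point $\lambda$ is a resolvent point of~$B$, hence not in the spectrum of~$B$. For the converse, suppose $\lambda\in\rho(A)\cap\cN_F$, i.e., $\lambda\in\rho(A)$ and $F(\lambda)=0$; I want to exhibit a nonzero $f\in\dom(A)$ with $(B-\lambda)f=0$. Revisiting the computation in the proof of the Krein lemma with $g=0$: equation~\eqref{eq:krein-3} becomes $f=-\langle f,\varphi\rangle(A-\lambda)^{-1}\psi$, which suggests the Ansatz $f:=(A-\lambda)^{-1}\psi$. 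A direct check gives $(A-\lambda)f=\psi$ and $\langle f,\varphi\rangle=\langle(A-\lambda)^{-1}\psi,\varphi\rangle=F(\lambda)-1=-1$, so $(B-\lambda)f=(A-\lambda)f+\langle f,\varphi\rangle\psi=\psi-\psi=0$. Moreover $f\neq0$ since $\psi\neq0$ and $(A-\lambda)^{-1}$ is injective. Thus $\lambda$ is an eigenvalue of~$B$, completing part a). I should note that since $B$ has discrete spectrum by Corollary~\ref{cr:krein}, every spectral point of~$B$ is in fact an eigenvalue, so ``belongs to the spectrum'' and ``is an eigenvalue'' coincide here.

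For part b), fix $\lambda=\lambda_n$, an eigenvalue of~$A$ with normalized eigenvector~$v_n$. I look for a nonzero $f\in\dom(A)$ solving $(A-\lambda_n)f+\langle f,\varphi\rangle\psi=0$. Write $f=\sum_{k\in I}c_kv_k$; then $(A-\lambda_n)f=\sum_{k\neq n}(\lambda_k-\lambda_n)c_kv_k$, and comparing Fourier coefficients in the equation gives, for $k\neq n$, $(\lambda_k-\lambda_n)c_k=-\langle f,\varphi\rangle b_k$, while the $n$th coefficient yields $\langle f,\varphi\rangle b_n=0$. Two cases: if $\langle f,\varphi\rangle=0$ we may take $f=v_n$ (then $(B-\lambda_n)v_n=(A-\lambda_n)v_n+\langle v_n,\varphi\rangle\psi=\overline{a_n}\,\psi$, so this works precisely when $a_n=0$); if $\langle f,\varphi\rangle\neq0$ then necessarily $b_n=0$, $c_k=-\langle f,\varphi\rangle b_k/(\lambda_k-\lambda_n)$ for $k\neq n$, and one must verify the self-consistency condition obtained by taking the inner product of $f$ with $\varphi$ — this will express $\langle f,\varphi\rangle$ in terms of the characteristic function evaluated in a limiting sense and will force $a_n=0$ as well (the $b_n=0$ case with $a_n\neq0$ reducing again to taking $f=v_n$). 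Either way, solvability with $f\neq0$ holds if and only if $a_nb_n=0$; I will organize the argument so that $a_nb_n=0$ is shown to be both necessary and sufficient for $\lambda_n$ to be an eigenvalue of~$B$, treating the sub-cases $a_n=0$ and $b_n=0$ symmetrically.

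The main obstacle is the bookkeeping in part b): one must be careful that the candidate $f=\sum_{k\neq n}c_kv_k$ actually lies in $H$ (and in $\dom(A)$), which requires checking square-summability of $(c_k)$ and of $(\lambda_kc_k)$; this uses that $\psi\in H$ (so $\sum|b_k|^2<\infty$) together with the fact that $|\lambda_k-\lambda_n|\to\infty$, so the extra factor $1/(\lambda_k-\lambda_n)$ only helps convergence away from finitely many terms — all of which are excluded since $k\neq n$ and the $\lambda_k$ are distinct. The remaining care is purely algebraic: correctly identifying which of $a_n$, $b_n$ vanishing is forced in each branch and confirming that the ``$a_nb_n=0$'' condition is exactly the disjunction of the two constructions, with no additional constraint sneaking in from the self-consistency equation.
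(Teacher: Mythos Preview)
Part a) is correct and essentially identical to the paper's argument: one direction is the Krein formula, and for the other you exhibit the eigenvector $(A-\lambda)^{-1}\psi$.

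Part b) has a genuine gap in the sufficiency direction when $b_n=0$ but $a_n\ne0$. Your parenthetical claim that this case ``reduces again to taking $f=v_n$'' is false: if $a_n\ne0$, then $(B-\lambda_n)v_n=\overline{a_n}\,\psi\ne0$, so $v_n$ is \emph{not} an eigenvector. Nor does the self-consistency condition ``force $a_n=0$''. Writing $f=c_nv_n+g$ with $g\in H_n:=H\ominus\langle v_n\rangle$ and using $\psi\in H_n$ (since $b_n=0$), the eigenvector equation gives $g=-t(A_n-\lambda_n)^{-1}\psi$ with $t:=\langle f,\varphi\rangle$, and the consistency relation becomes $t\,F(\lambda_n)=c_n\overline{a_n}$. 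The point you miss is that $c_n$ is a \emph{free} parameter here (the $n$th Fourier coefficient of the equation reads $0=0$ once $b_n=0$), so this relation can always be satisfied with $f\ne0$, whatever the values of $a_n$ and $F(\lambda_n)$. Thus your eigenvector-construction approach \emph{can} be salvaged, but the reasoning as written is wrong and would, if taken at face value, lead you to the false conclusion that $\lambda_n\notin\sigma(B)$ in this subcase.

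The paper sidesteps this bookkeeping entirely: when $b_n=0$, it observes that $\langle(B-\lambda_n)y,v_n\rangle=\langle y,(A-\lambda_n)v_n\rangle=0$ for every $y\in\dom(B)$, so $\operatorname{ran}(B-\lambda_n)\subset H_n$ and $B-\lambda_n$ fails to be surjective; hence $\lambda_n\in\sigma(B)$ without ever constructing an eigenvector.
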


\begin{proof}
a) Let a point $\lambda \in \rho(A) $ belong to the spectrum of the operator~$B$. By Corollary \ref{cr:krein}, $\lambda$ is an eigenvalue of the operator~$B$, and we denote by~$y$ a corresponding eigenvector. Then \eqref{eq:krein-1} holds with~$g=0$ and with $y$ in place of~$f$, so that equations~\eqref{eq:krein-3} and~\eqref{eq:krein-4} can be recast as
\[
    y = - \langle y, \varphi \rangle (A - \lambda)^{-1} \psi
\]
and
\[
   \langle y ,\varphi \rangle F (\lambda) = 0,
\]
respectively. Since $y$ is a nonzero vector, we see from the former equality that $\langle y, \varphi \rangle\ne0$, and then the latter one yields $F(\lambda)=0$.

Conversely, if $F(\lambda)=0$ for some $\lambda \in \rho(A) $, then $y: = (A - \lambda)^{-1} \psi $ is an eigenvector of the operator $B$ for the eigenvalue $\lambda$, as is seen from the equalities
\[
    (A - \lambda) y + \langle y, \varphi \rangle \psi
        = [1 + \langle (A - \lambda)^{-1} \psi, \varphi \rangle]\psi
        = F(\lambda)\psi= 0.
\]
This completes the proof of part a).

b) Let $\lambda = \lambda_n $ belong to the spectrum of the operator $B$; then there is a vector $y \in \dom(B)$ such that $B y = \lambda_n y$, i.e.,
\begin{equation}\label{eq:1-2}
    (B-\lambda_n) y
    = (A - \lambda_n) y + \langle y, \varphi \rangle \psi = 0.
\end{equation}
Taking the inner product with $v_n$ results in
\begin{align*}
\langle (A - \lambda_n) y, v_n \rangle + \langle y, \varphi \rangle \langle \psi, v_n \rangle
         & = \langle y, (A - \lambda_n) v_n \rangle + \langle y, \varphi \rangle \langle \psi, v_n \rangle \\
         & = \langle y, \varphi \rangle \langle \psi, v_n \rangle = 0.
\end{align*}
Thus $\langle y, \varphi \rangle = 0$ or $\langle \psi, v_n \rangle = 0$. If $\langle y, \varphi \rangle = 0$, then $y = c v_n$ for some constant $c$ on account of~\eqref{eq:1-2}, so that $a_n = 0 $. If $\langle \psi, v_n \rangle = 0 $, then $b_n = 0$. Therefore the point $\lambda = \lambda_n $ belongs to the spectrum of $B$ only if $a_nb_n = 0$.

Conversely, let $a_nb_n = 0$; we need to prove that the point $\lambda = \lambda_n$ belongs to the spectrum of $B$. If $a_n = 0$, then
\[
    (B-\lambda_n) v_n = (A-\lambda_n)v_n + a_n \psi = 0
\]
so that $y = v_n$ is an eigenvector of~$B$ for the eigenvalue $\lambda_n$. If $b_n = 0$, then for all $y \in \dom(B)$
$$
    \langle (B - \lambda_n) y, v_n \rangle
    = \langle (A - \lambda_n) y, v_n \rangle
    = \langle  y, (A - \lambda_n)v_n \rangle =0,
$$
so that $B - \lambda_n$ is not surjective on $\dom(B)$ and the point $\lambda = \lambda_n$ belongs to the spectrum of the operator $B$. The proof is complete.
\end{proof}

We introduce the sets of indices
$$
I_0 \overset{\text{def}}{=} \{n \in I \, | \, a_nb_n = 0 \}, \quad
I_1 \overset{\text{def}}{=} \{n \in I \, | \, a_nb_n \neq 0 \}
$$
of cardinalities (possibly infinite) $N_0$ and $N_1$ respectively, and split the eigenvalues of~$A$ into the respective subsets
$$
    \sigma_0 (A) \overset {\text{def}}{=} \{\lambda_n \, | \, n \in I_0 \}
        \text {\qquad and \qquad}
    \sigma_1 (A) \overset {\text{def}}{=} \{\lambda_n \, | \, n \in I_1 \}.
$$
According to Lemma~\ref{lem:eig-B}, the spectrum of the operator $B$ consists of two parts: $\sigma_0(A) = \sigma(A) \cap \sigma(B)$, the common eigenvalues of $A$ and $B$, and the set $\cN_F$ of zeros of the function~$F$ in $\rho(A)$. Certainly, the latter part of $\sigma(B)$ is more interesting.


\section{Eigenvalue multiplicity}\label{sec:mult}


In this section we discuss multiplicity of eigenvalues of the operator~$B$. 

First we recall that the \textit{geometric multiplicity} of an eigenvalue $\lambda$ of an operator~$T$ is the dimension of the corresponding eigenspace, i.e., the number $\dim \ker (T - \lambda)$ \cite[Ch.~5.1]{Kat95}, and its \textit{algebraic multiplicity} is the dimension of the corresponding root subspace, i.e., the rank of the corresponding spectral projector \cite[Ch.~5.4]{Kat95}. Note that for a selfadjoint operator geometric and algebraic multiplicities of every eigenvalue are equal.

Before proceeding, we recall that the function~$F$ was initially defined only on the resolvent set of the operator~$A$. However, using the spectral theorem for the operator~$A$, we can write the function $F$ as
\begin{equation}\label{eq:F-new}
    F(\lambda)
        = \sum_{n \in I_1} \frac {\overline{a_n} b_n}{\lambda_n - \lambda} + 1,
\end{equation}
and this formula gives an analytic continuation of~$F$ onto the set $\sigma_0(A)$. We shall denote this continuation by the same letter~$F$ but will write $\cN_F^0$ for the set of zeros of $F$ continued onto $\mathbb{C} \setminus \sigma_1(A)$.

\begin{lemma}\label{lem:geom-mult}
An eigenvalue $\lambda$ of~$B$ has geometric multiplicity larger than~$1$ if and only if there exists an integer $n$ such that $\lambda = \lambda_n$, $a_n = b_n = 0$, and $F(\lambda_n) = 0$. In that case, the geometric multiplicity of~$\lambda$ is equal to~$2$.
\end{lemma}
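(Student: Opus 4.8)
The plan is to analyze the eigenvalue equation $(B-\lambda)y=0$ directly in the orthonormal eigenbasis $\{v_n\}_{n\in I}$ of $A$. Writing $y=\sum_{k\in I}c_kv_k$ and $t:=\langle y,\varphi\rangle$, the identity $(A-\lambda)y+\langle y,\varphi\rangle\psi=0$ is equivalent to the scalar relations $(\lambda_k-\lambda)c_k+tb_k=0$ for all $k\in I$, together with the consistency condition $t=\sum_{k\in I}\overline{a_k}c_k$. If $\lambda\in\rho(A)$, these relations force $c_k=-tb_k/(\lambda_k-\lambda)$ for every $k$, so that $y$ is a scalar multiple of $(A-\lambda)^{-1}\psi$ and the corresponding eigenspace is one-dimensional; hence a geometric multiplicity exceeding $1$ can only occur at $\lambda=\lambda_n$, and then $a_nb_n=0$ by Lemma~\ref{lem:eig-B}.

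So I fix $\lambda=\lambda_n$ with $n\in I_0$. The relations with $k\ne n$ again give $c_k=-tb_k/(\lambda_k-\lambda_n)$, which shows that every eigenvector lies in the span of $v_n$ and of $w:=\sum_{k\ne n}b_k(\lambda_k-\lambda_n)^{-1}v_k$; in particular the geometric multiplicity of any eigenvalue of $B$ is at most $2$. One checks that $w$ is a genuine element of $\dom(A)$ (using that $|\lambda_k/(\lambda_k-\lambda_n)|$ is bounded over $k\ne n$), that $(A-\lambda_n)w=\psi-b_nv_n$, and that $\langle w,\varphi\rangle=F(\lambda_n)-1$, where $F(\lambda_n)$ is read off the continued formula~\eqref{eq:F-new}, which is regular at $\lambda_n$ precisely because $n\in I_0$. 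The two remaining constraints on the pair $(c_n,t)$ are the $k=n$ relation, which reads $tb_n=0$, and the consistency condition, which after the above substitutions and the cancellation of the terms with $\overline{a_k}b_k=0$ collapses to $tF(\lambda_n)=\overline{a_n}c_n$. A short back-substitution using these two identities shows conversely that any such pair $(c_n,t)$ produces an honest eigenvector $y=c_nv_n-tw\in\dom(B)$.

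It then remains to compute the dimension of the solution space of the $2\times2$ system $tb_n=0$, $tF(\lambda_n)=\overline{a_n}c_n$, distinguishing the cases allowed by $a_nb_n=0$. If $b_n\ne0$, then $t=0$ and, since then $a_n=0$, the coordinate $c_n$ is unconstrained, so the dimension is $1$. If $b_n=0$ but $a_n\ne0$, then $c_n$ is determined by $t$, so the dimension is again $1$ (here $w\ne0$, because $b_n=0$ forces $\psi=\sum_{k\ne n}b_kv_k\ne0$, and $w\perp v_n$). Finally, if $a_n=b_n=0$ the system reduces to $tF(\lambda_n)=0$: both $c_n$ and $t$ are free exactly when $F(\lambda_n)=0$, which gives dimension $2$, whereas $F(\lambda_n)\ne0$ forces $t=0$ and dimension $1$. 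This yields both implications of the equivalence and pins the multiplicity at $2$.

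I do not expect a substantive obstacle; the task is careful bookkeeping. The points that require attention are handling the $k=n$ relation separately from the rest, making sure it is the $\sigma_0(A)$-continued $F$ that enters, and checking that the formal vector $c_nv_n-tw$ indeed lies in $\dom(B)=\dom(A)$ and solves $(B-\lambda_n)y=0$ — all of which follow from elementary estimates and the identities $(A-\lambda_n)w=\psi-b_nv_n$ and $\langle w,\varphi\rangle=F(\lambda_n)-1$. The bound ``at most $2$'' comes for free, since every eigenvector is determined by the two scalars $c_n$ and $t=\langle y,\varphi\rangle$.
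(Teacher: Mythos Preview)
Your argument is correct. The key computations---$(A-\lambda_n)w=\psi-b_nv_n$, $\langle w,\varphi\rangle=F(\lambda_n)-1$, and the reduction of the eigenvalue equation to the $2\times2$ system $tb_n=0$, $tF(\lambda_n)=\overline{a_n}c_n$---check out, and the case analysis on $(a_n,b_n,F(\lambda_n))$ gives exactly the claimed dimensions. The verification that $y=c_nv_n-tw$ actually solves $(B-\lambda_n)y=0$ once the two scalar constraints hold is the identity
\[
(B-\lambda_n)(c_nv_n-tw)=tb_n\,v_n+(\overline{a_n}c_n-tF(\lambda_n))\psi,
\]
so nothing is missing there.

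The paper's proof reaches the same conclusion by a more operator-theoretic route: it works with the restriction $A_n$ of $A$ to $H_n=H\ominus\langle v_n\rangle$, argues case by case that any eigenvector in $H_n$ must be collinear with $(A_n-\lambda_n)^{-1}\psi$ (forcing $F(\lambda_n)=0$), and then shows that a second independent eigenvector must be $v_n$ itself (forcing $a_n=0$). Your approach instead expands in the eigenbasis from the outset and packages the whole analysis into the rank of a single $2\times2$ linear system, which handles all subcases uniformly and makes the bound ``at most $2$'' immediate. The paper's argument, on the other hand, stays closer to the language used in the subsequent lemmas on Jordan chains (Lemmata~\ref{lem:alg-mult} and \ref{lem:alg-mult0}), where the vectors $(A_n-\lambda_n)^{-k}\psi$ reappear. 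Both proofs ultimately rely on the same two identities for $w$ (equivalently, for $(A_n-\lambda_n)^{-1}\psi$ when $b_n=0$).
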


\begin{proof}
Assume that $\lambda\in\sigma(B)$ has geometric multiplicity larger than~$1$, and denote by $y$ any of the corresponding eigenvectors. Then
\[
    (B-\lambda) y = (A-\lambda) y + \langle y, \varphi \rangle \psi =0,
\]
and if $\lambda$ is a resolvent point of $A$, then $y$ must be collinear to the vector $(A-\lambda)^{-1}\psi$ and thus the geometric multiplicity of~$\lambda$ is one. Therefore $\lambda\in\sigma(A)$, so that $\lambda=\lambda_n$ for some $n\in I_0$.
Now, as in the proof of part b) of Lemma~\ref{lem:eig-B}, we find that
\[
      0=\langle (B-\lambda_n)y, v_n \rangle
        = \langle y, \varphi \rangle  \langle \psi, v_n \rangle
        = \langle y, \varphi \rangle   b_n,
\]
so that $\langle y, \varphi \rangle =0$ or $b_n=0$.

Assume that $b_n\ne0$; then $\langle y, \varphi \rangle =0$ and
$(B-\lambda_n)y =(A-\lambda_n) y =0$. Thus $y$ in that case must be collinear to $v_n$, and the geometric multiplicity of~$\lambda_n$ is then~$1$.
Therefore, $b_n=0$ and the vector $\psi$ belongs to the subspace $H_n:=H \ominus \langle v_n \rangle$. Since the nullspace of $B-\lambda_n$ is of dimension at least~$2$, there is an eigenvector $w$ in $H_n$. We denote by $A_n$ the restriction $A|_{H_n}$ of~$A$ onto its invariant subspace~$H_n$ and see that
\[
    (A_n - \lambda_n) w + \langle w, \varphi \rangle \psi =0.
\]
Note that $\lambda_n$ is a resolvent point of the operator~$A_n$, so that
the above equality implies that $w= c(A_n-\lambda_n)^{-1}\psi$ and that
\[
    \langle (A_n-\lambda_n)^{-1}\psi, \phi\rangle +1 = 0,
\]
i.e., that $F(\lambda_n)=0$. Therefore, there is at most one (up to a factor) eigenvector of~$B$ in the space~$H_n$, and thus its second eigenvector must be of the form $v_n + w_n$ with some $w_n \in H_n$. However, then
\[
    (B-\lambda_n)(v_n + w_n) = (A-\lambda_n)w_n
        + \langle v_n + w_n, \varphi \rangle \psi = 0
\]
so that $w_n$ is collinear to the eigenvector $(A_n-\lambda_n)^{-1}\psi$ found earlier, and thus $v_n$ must also be an eigenvector of~$B$. As $(B-\lambda_n)v_n = \langle v_n , \varphi \rangle \psi$, this requires that $a_n=0$.

Summing up, we see that the assumption that $\dim \ker (B-\lambda) > 1$ implies that $\lambda=\lambda_n$ for some~$n\in I$ and $b_n=0$; moreover, there is an eigenvector $w$ in the subspace~$H_n$ if and only if $F(\lambda_n)=0$, and then $w$ is collinear to~$(A_n-\lambda_n)^{-1}\psi$. The second eigenvector must be  $v_n$, which is possible if and only if $a_n=0$. Therefore all the conditions are necessary, and the geometric multiplicity is then equal to~$2$.

To prove that these conditions are also sufficient, we assume that $\lambda=\lambda_n$ is such that $a_n=b_n=0$ and $F(\lambda_n)=0$. Then, as shown above, $v_n$ and $w:= (A_n-\lambda_n)^{-1}\psi \in H_n$ are linearly independent eigenvectors of~$B$ for the eigenvalue~$\lambda_n$. The proof is complete.
\end{proof}

\begin{example}\rm 
Let $\lambda$ and $\mu$ be distinct eigenvalues of an operator $A$ with corresponding normalized eigenvectors~$v$ and $w$; then for the operator
\(
	B := A + (\lambda - \mu)\langle \cdot, w\rangle w
\) 
the number~$\lambda$ is an eigenvalue of geometric multiplicity two, $v$ and $w$ being the corresponding eigenvectors. As the above lemma shows, geometric multiplicity cannot be made larger by a rank-one perturbation of~$A$.	
\end{example}

\begin{remark}\label{rem:multiplicity}
Assume that $a_n=b_n=0$, so that $\lambda_n$ is an eigenvalue of~$B$ with  eigenvector~$v_n$. Then $v_n$ is also an eigenvector of the adjoint operator~$B^*$, so that the subspaces $\langle v_n \rangle$ and $H_n$ are reducing for $B$. Moreover, the restrictions of $A$ and $B$ onto $\langle v_n\rangle$ coincide.

More generally, we denote by $H^{0}$ the closed linear space of all eigenvectors~$v_k$ of~$A$ for which $a_k=b_k=0$. Then the subspace $H^0$ is reducing for~$B$ and the restrictions of~$A$ and of $B$ onto $H^0$ coincide. Therefore, we can concentrate on the study of the restriction of the operator~$B$ onto its invariant subspace $H^1:=H \ominus H^0$. Without loss of generality, we shall assume that $H^0 = \{0\}$, so that $H=H^1$. Under this assumption, all eigenvalues of the operator~$B$ are geometrically simple.
\end{remark}

Next we discuss algebraic multiplicity of the eigenvalues of~$B$ in the resolvent set of~$A$. As every such an eigenvalue $\lambda$ is geometrically simple by Lemma~\ref{lem:geom-mult}, its algebraic multiplicity coincides with the largest length of chains of eigen- and associated vectors (also called Jordan chains). We recall that a sequence of vectors~$y_0, y_1, \dots, y_m$ forms a chain of eigen-  and associated vectors of~$B$ for an eigenvalue~$\lambda$ if every $y_k$ is in the domain of~$B$, $(B-\lambda)y_0=0$, and $(B-\lambda)y_k = y_{k-1}$ for $k=1,\dots,m$.
Chains of eigen- and associated vectors are not defined uniquely; however, for geometrically simple eigenvalues all such chains are closely related, as the next lemma demonstrates.

\begin{lemma}\label{lem:jordan-chains}
Assume that $\lambda$ is a (geometrically simple) eigenvalue of the operator~$B$ and $y_0,  y_1,\dots, y_m$ is a chain of eigen- and associated vectors corresponding to~$\lambda$.
\begin{itemize}
 \item[(i)] For every sequence of complex numbers $c_1, \dots, c_m$ introduce the vectors $\tilde y_0 = y_0$ and
 \begin{equation}\label{eq:CEAV}
    \tilde y_k = y_k + c_1 y_{k-1} + \cdots + c_k y_0
 \end{equation}
 for $k=1,\dots,m$. Then $\tilde y_0, \tilde y_1,\dots,\tilde y_m$ is a chain of eigen- and associated vectors of~$B$ corresponding to $\lambda$.
 \item[(ii)] Vice versa, assume that $\tilde y_0, \tilde y_1,\dots,\tilde y_m$ is another chain of eigen- and associated vectors of~$B$ corresponding to the eigenvalue $\lambda$ such that $\tilde y_0 = y_0$. Then there are constants $c_1, \dots, c_m$ such that for all $k=1,2,\dots,m$  relations \eqref{eq:CEAV} hold.
\end{itemize}
\end{lemma}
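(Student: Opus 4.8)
The plan is to treat both parts by direct computation, deferring the only non-elementary ingredient --- the one-dimensionality of $\ker(B-\lambda)$ --- to a single step in part~(ii).

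For part~(i), I would just apply $B-\lambda$ to the vector $\tilde y_k$ defined in \eqref{eq:CEAV}. Since $\dom(B)$ is a linear subspace containing every $y_j$, each $\tilde y_k$ lies in $\dom(B)$. Using $(B-\lambda)y_0=0$, the relations $(B-\lambda)y_j=y_{j-1}$ for $j\ge1$, and linearity, the terms telescope to
\[
    (B-\lambda)\tilde y_k = y_{k-1} + c_1 y_{k-2} + \cdots + c_{k-1}y_0,
\]
which is precisely $\tilde y_{k-1}$; together with $(B-\lambda)\tilde y_0=(B-\lambda)y_0=0$ and $\tilde y_0=y_0\ne0$ this shows that $\tilde y_0,\tilde y_1,\dots,\tilde y_m$ is again a chain of eigen- and associated vectors for $\lambda$. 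This step is entirely routine.

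For part~(ii), I would argue by induction on $k$, producing the constants one at a time. For $k=0$ there is nothing to prove, since $\tilde y_0=y_0$ is assumed. Suppose $c_1,\dots,c_{k-1}$ have been chosen so that \eqref{eq:CEAV} holds for all indices up to $k-1$, and set
\[
    z := \tilde y_k - \bigl(y_k + c_1 y_{k-1} + \cdots + c_{k-1}y_1\bigr).
\]
Applying $B-\lambda$ and using the chain relations for both $\{y_j\}$ and $\{\tilde y_j\}$ gives $(B-\lambda)z=\tilde y_{k-1}-(y_{k-1}+c_1 y_{k-2}+\cdots+c_{k-1}y_0)$, which vanishes by the induction hypothesis, so $z\in\ker(B-\lambda)$. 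Here is the only place geometric simplicity is used: since $\dim\ker(B-\lambda)=1$ and $y_0$ is a nonzero element of this kernel, we have $\ker(B-\lambda)=\mathbb{C}\,y_0$, hence $z=c_k y_0$ for a (uniquely determined) scalar $c_k$. Rearranging recovers \eqref{eq:CEAV} for index~$k$, closing the induction.

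I do not expect any real obstacle; the only points that need care are to invoke geometric simplicity --- together with the fact that $y_0$ \emph{spans}, not merely belongs to, the eigenspace --- at the right moment in part~(ii), and to keep the indices straight in the telescoping sums. Note that by Remark~\ref{rem:multiplicity} and Lemma~\ref{lem:geom-mult} the eigenvalues of~$B$ relevant to us may be assumed geometrically simple, so the lemma applies in all cases of interest.
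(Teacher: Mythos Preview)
Your proposal is correct and matches the paper's proof essentially verbatim: part~(i) is the same direct telescoping computation, and part~(ii) is the same induction, using geometric simplicity to write the relevant difference as a scalar multiple of~$y_0$. The only cosmetic distinction is that you package the induction step via the auxiliary vector~$z$, whereas the paper works directly with $\tilde y_l - y_l - c_1 y_{l-1} - \cdots - c_{l-1}y_1$.
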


\begin{proof}
By definition of~$\tilde y_k$ and $y_k$, we find that
\[
    (B-\lambda)\tilde y_k =y_{k-1}+ c_1 y_{k-1} + \cdots + c_{k-1} y_0
        = \tilde y_{k-1}
\]
for $k\ge1$ thus establishing Part~(i).

\
For part (ii), the proof is by induction. Since $(B-\lambda)(\tilde y_1 - y_1) = \tilde y_0 - y_0 =0$, it follows that there is $c_1 \in \mathbb{C}$ such that $\tilde y_1- y_1 = c_1 y_0$ thus establishing the base of induction. Assume that the claim has already been proved for $k =1, \dots, l-1<m$; then
\[
    (B-\lambda)(\tilde y_l - y_l) = \tilde y_{l-1} - y_{l-1}
        = c_1 y_{l-2} + \dots + c_{l-1} y_0
\]
and
\[
    (B-\lambda)(\tilde y_l - y_l - c_1 y_{l-1} - \dots - c_{l-1} y_1) = 0.
\]
Therefore, there exists a number $c_l \in \mathbb{C}$ such that $\tilde y_l - y_l - c_1 y_{l-1} - \dots - c_{l-1} y_1 = c_l y_0$, thus finishing the induction step and the proof of the lemma.
\end{proof}

\begin{lemma}\label{lem:alg-mult}
Let $ \lambda^* \in \rho (A)$ be an eigenvalue of the operator~$B$. Then its algebraic multiplicity coincides with the multiplicity of~$\lambda^*$ as a zero of the characteristic function $F$.
\end{lemma}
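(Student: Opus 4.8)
The plan is to match Jordan chains of~$B$ at~$\lambda^*$ with the vanishing of the Taylor coefficients of~$F$ at~$\lambda^*$. Since $\lambda^*\in\rho(A)$ is an eigenvalue of~$B$, Lemma~\ref{lem:geom-mult} shows it is geometrically simple, so (as recorded just before Lemma~\ref{lem:jordan-chains}) its algebraic multiplicity~$\mu$ equals the largest length of a chain of eigen- and associated vectors of~$B$ for~$\lambda^*$; this length is finite since $B$ has compact resolvent by Corollary~\ref{cr:krein}. On the other hand, $F$ is holomorphic on the connected open set $\rho(A)$ and is not identically zero (indeed $F(\lambda)\to 1$ as $|\myIm\lambda|\to\infty$, because $\|(A-\lambda)^{-1}\|\le|\myIm\lambda|^{-1}$ for the self-adjoint operator~$A$), so it has a finite order of vanishing~$p$ at~$\lambda^*$, and $p\ge 1$ because $F(\lambda^*)=0$ by Lemma~\ref{lem:eig-B}(a). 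The goal is to prove $\mu=p$.

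I would first record two elementary resolvent identities for $\lambda\in\rho(A)$: writing $R_\lambda:=(A-\lambda)^{-1}$, the resolvent identity gives $\tfrac{d}{d\lambda}R_\lambda=R_\lambda^{\,2}$, whence $\langle R_\lambda^{\,k+1}\psi,\varphi\rangle=F^{(k)}(\lambda)/k!$ for every $k\ge 1$, while $\langle R_\lambda\psi,\varphi\rangle=F(\lambda)-1$ by the very definition of~$F$. In particular, at $\lambda=\lambda^*$ we have $\langle R_{\lambda^*}\psi,\varphi\rangle=-1$, $\langle R_{\lambda^*}^{\,j}\psi,\varphi\rangle=0$ for $2\le j\le p$, and $\langle R_{\lambda^*}^{\,p+1}\psi,\varphi\rangle=F^{(p)}(\lambda^*)/p!\ne 0$.

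For the inequality $\mu\ge p$ I would exhibit an explicit Jordan chain of length~$p$: set $y_k:=R_{\lambda^*}^{\,k+1}\psi$ for $k=0,1,\dots,p-1$. These vectors lie in $\dom(A)=\dom(B)$, $y_0=R_{\lambda^*}\psi\ne 0$, and, writing $(B-\lambda^*)u=(A-\lambda^*)u+\langle u,\varphi\rangle\psi$, one checks $(B-\lambda^*)y_0=\psi+\langle R_{\lambda^*}\psi,\varphi\rangle\psi=F(\lambda^*)\psi=0$ and, for $1\le k\le p-1$, $(B-\lambda^*)y_k=R_{\lambda^*}^{\,k}\psi+\langle R_{\lambda^*}^{\,k+1}\psi,\varphi\rangle\psi=y_{k-1}$ since $F^{(k)}(\lambda^*)=0$. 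For the reverse inequality $\mu\le p$, assume a chain $\tilde y_0,\dots,\tilde y_{p}$ of length $p+1$ existed. Any eigenvector of~$B$ at~$\lambda^*$ is a nonzero multiple of $R_{\lambda^*}\psi$, so, rescaling the whole chain (which preserves the chain relations), we may take $\tilde y_0=R_{\lambda^*}\psi=y_0$. Then $\tilde y_0,\dots,\tilde y_{p-1}$ and $y_0,\dots,y_{p-1}$ are two chains of~$B$ for~$\lambda^*$ with the same first vector, so Lemma~\ref{lem:jordan-chains}(ii) provides constants $c_1,\dots,c_{p-1}$ with $\tilde y_k=y_k+c_1y_{k-1}+\dots+c_ky_0$; in particular $\tilde y_{p-1}=R_{\lambda^*}^{\,p}\psi+c_1R_{\lambda^*}^{\,p-1}\psi+\dots+c_{p-1}R_{\lambda^*}\psi$. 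Applying $R_{\lambda^*}$ to $(B-\lambda^*)\tilde y_p=\tilde y_{p-1}$ gives $\tilde y_p=R_{\lambda^*}^{\,p+1}\psi+c_1R_{\lambda^*}^{\,p}\psi+\dots+c_{p-1}R_{\lambda^*}^{\,2}\psi-\langle\tilde y_p,\varphi\rangle R_{\lambda^*}\psi$, and taking the inner product with~$\varphi$ and using $\langle R_{\lambda^*}\psi,\varphi\rangle=-1$ and $\langle R_{\lambda^*}^{\,j}\psi,\varphi\rangle=0$ for $2\le j\le p$ makes everything cancel except $\langle R_{\lambda^*}^{\,p+1}\psi,\varphi\rangle$, forcing $\langle R_{\lambda^*}^{\,p+1}\psi,\varphi\rangle=0$, i.e.\ $F^{(p)}(\lambda^*)=0$ --- contrary to the order of vanishing being exactly~$p$. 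Therefore $\mu=p$.

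The one genuinely delicate step is the bound $\mu\le p$, and the trick that keeps it short is to invoke Lemma~\ref{lem:jordan-chains}(ii) to reduce an arbitrary chain to the explicit one built from the powers $R_{\lambda^*}^{\,k+1}\psi$, after which a single inner-product computation yields the contradiction. The remaining pieces --- differentiating the resolvent, verifying that the $y_k$ form a chain, and tracking the cancellation --- are routine; the points to be careful about are the appeal to geometric simplicity (Lemma~\ref{lem:geom-mult}) to identify $\mu$ with the maximal chain length, and the appeals to compactness of the resolvent (Corollary~\ref{cr:krein}) and to $F\not\equiv 0$ to guarantee that $\mu$ and $p$ are both finite.
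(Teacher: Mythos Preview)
Your proof is correct and follows essentially the same line as the paper's: both construct the explicit chain $y_k=(A-\lambda^*)^{-(k+1)}\psi$ from the identity $\langle (A-\lambda^*)^{-(k+1)}\psi,\varphi\rangle=F^{(k)}(\lambda^*)/k!$ to get one inequality, and for the reverse both reduce an arbitrary chain $\tilde y_0,\dots$ with $\tilde y_0=y_0$ to this explicit one and read off the vanishing of the derivatives of~$F$. The only organizational difference is that you package the reduction step as an appeal to Lemma~\ref{lem:jordan-chains}(ii) and then derive a contradiction at level~$p$, whereas the paper runs the same induction inline and phrases the conclusion as ``a chain of length $m+1$ forces $F$ to vanish to order at least $m+1$''; the computations are identical.
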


\begin{proof}
By Lemma~\ref{lem:eig-B}, $F(\lambda^*) = 0$, and the proof of that lemma shows that the vector $y_0:=(A-\lambda^*)^{-1}\psi$ is an eigenvector of~$B$ for the eigenvalue~$\lambda^*$.
Denote by $l+1$ the multiplicity of zero $\lambda=\lambda^*$ of~$F$ and set $y_k: = (A- \lambda ^ *) ^ {-(1+k)} \psi$ for $k=1,2,\dots,l$.
Recall~\cite[\S III.6]{Kat95} that the resolvent $(A-\lambda)^{-1}$ is differentiable on the set~$\rho(A)$ and that
\[
      \frac{d}{d\lambda} (A- \lambda)^{-1} = (A-\lambda)^{-2}.
\]
Observing now that
\begin{equation}\label{eq:alg-1}
    \frac1{k!}F^{(k)}(\lambda^*)
        = \langle (A - \lambda^*)^{-(1+k)} \psi, \varphi\rangle
        = \langle y_k, \varphi \rangle
\end{equation}
for $k\in\mathbb{N}$, we find that
\[
    (B- \lambda^*)y_k
    = (A - \lambda^*)y_k + \langle y_k, \varphi \rangle \psi
    = y_{k-1} + \frac1{k!} F^{(k)} (\lambda^*)\psi = y_{k-1}
\]
for $k=1,\dots, l$. Thus $y_0, y_1, \dots, y_l$ is a Jordan chain of the operator~$B$ for the eigenvalue~$\lambda^*$, so that the algebraic multiplicity of~$\lambda^*$ is at least~$l+1$.

Assume that $\tilde y_0, \tilde y_1, \dots, \tilde y_m$ is a Jordan chain of~$B$ for the eigenvalue~$\lambda^*$. Then $\tilde y_0$ is an eigenvector of~$B$, and without loss of generality we can assume that $\tilde y_0=y_0$.
Now we prove by induction that, with $c_k:= - \langle \tilde y_k, \varphi \rangle$ for $k=1,\dots, m$, we have
\begin{equation}\label{eq:tilde-y}
    \tilde y_k = y_k + c_1 y_{k-1} + \dots + c_k y_0
\end{equation}
and that
$F(\lambda^*) = F'(\lambda^*)=\dots = F^{(m)}(\lambda^*)=0$.

Indeed, $F(\lambda^*) =0$ by Lemma~\ref{lem:eig-B}, and the relation
\[
    (B-\lambda^*)\tilde y_1
        = (A-\lambda^*) \tilde y_1 + \langle \tilde y_1 , \varphi \rangle \psi = y_0
\]
shows that $\tilde y_1 = y_1 + c_1 y_0$. Taking now the scalar product with $\varphi$ and recalling that $\langle y_0,\varphi \rangle = -1$, we get the equality
$\langle y_1, \varphi \rangle = 0$ resulting in $F'(\lambda^*)=0$ in view of~\eqref{eq:alg-1} and establishing the base of induction.

Assuming that the relations
$F(\lambda^*)=F'(\lambda^*) = \dots = F^{(k)}(\lambda^*)=0$ and
$\tilde y_k = y_k + c_1 y_{k-1} + \dots + c_k y_0$ have already been proved for some $k<m$, we recast the equality $(B - \lambda^*)\tilde y_{k+1}=\tilde y_k$ as
\[
        (A - \lambda^*)\tilde y_{k+1}
            + \langle \tilde y_{k+1} , \varphi \rangle \psi
            = y_k + c_1 y_{k-1} + \dots + c_k y_0.
\]
Applying $(A-\lambda^*)^{-1}$ to both sides of the above equality leads to the relation
\[
    \tilde y_{k+1} = y_{k+1} + c_1 y_k + \dots + c_k y_1 +c_{k+1} y_0,
\]
which on account of~\eqref{eq:alg-1} and the induction assumption yields
\begin{align*}
    c_{k+1} &:= - \langle \tilde y_{k+1}, \varphi \rangle
            =  - \langle y_{k+1} + c_1 y_k + \dots + c_k y_1 +c_{k+1} y_0, \varphi \rangle\\
            &\hphantom{:}= -\frac1{(k+1)!} F^{(k+1)}(\lambda^*) + c_{k+1}
\end{align*}
and $F^{(k+1)}(\lambda^*)=0$, thus completing the induction step. Therefore,
$\lambda^*$ is a zero of~$F$ of multiplicity at least~$m+1$, and the proof is complete.
\end{proof}

\begin{example}
To demonstrate the above result, we show how to ``move'' any $m+1$ eigenvalues of $A$ into an arbitrary point $\lambda\not\in\sigma(A)$. For definiteness, we choose the eigenvalues $\lambda_0, \dots, \lambda_m$ and a point $\lambda = i$. We shall take both $\phi$ and $\psi$ in the linear span of the eigenvectors~$v_0, v_1,\dots, v_m$, i.e., 
\[
	\phi = \sum_{n=0}^m a_n v_n, \qquad \psi = \sum_{n=0}^m b_n v_n
\]
and set $c_n := \overline{a_n}b_n$ for $n=0, 1,\dots, m$. The corresponding characteristic function can be then written as
\[
	F(\lambda) = \sum_{n=0}^m \frac{c_n}{\lambda_n -\lambda} + 1 = \frac{p(\lambda)}{\prod_{n=0}^m (\lambda - \lambda_n)},
\]
with a monic polynomial~$p$ of degree $m+1$. According to the above lemma, $F$ must satisfy the equalities $F(i) = F'(i) = \dots = F^{(m)}(i) = 0$, which implies that 
$p(\lambda) = (\lambda-i)^{m+1}$. Then we find that 
\[
	c_n = - \operatornamewithlimits{res}_{\lambda=\lambda_n} F(\lambda)
			= - \frac{(\lambda_n - i)^{m+1}} 	{\prod_{k\ne n} (\lambda_n - \lambda_k)}
\]
and can choose, e.g., $a_n=\overline{c_n}$ and $b_n = 1$ for $n=0,1,\dots,m$.  

In particular, $\phi = \sum_{n=0}^m \overline{c_n}v_n$, $\psi = \sum_{n=0}^m v_n$, and the vectors
\[
	y_k := (A-i)^{k+1}\psi = \sum_{n=0}^m \frac{v_n}{(\lambda_n - i)^{k+1}}, \qquad k=0,1,\dots,m,
\]
satisfy the relations 
\begin{align*}
	(B-i)y_k &= (A-i)y_k + \langle y_k , \phi \rangle \psi \\
			 &= \sum_{n=0}^m \frac{v_n}{(\lambda_n - i)^{k}} + \sum_{n=0}^m \frac{c_n}{(\lambda_n - i)^{k+1}}\psi\\
			 &=\begin{cases}
			 F(i) \psi, \quad & k=0 \\
			 y_{k-1} + k!F^{(k)}(i), \quad & k=1, \dots, m.
			 \end{cases}
\end{align*}
In view of the relations $F(i) = F'(i) = \dots = F^{(m)}(i) = 0$, these vectors form a chain of eigen- and associated vectors of $B$ for the eigenvalue~$i$. 
\end{example}

As we noted in Lemma~\ref{lem:eig-B}, every point $\lambda = \lambda_n$ of $\sigma_0(A)$ is also an eigenvalue of~$B$. We agreed earlier to exclude the non-interesting case where $a_n=b_n=0$, which by Lemma~\ref{lem:geom-mult} means that such a $\lambda_n$ is a geometrically simple eigenvalue of~$B$. However, its algebraic multiplicity may be greater than one, and we shall relate it to the multiplicity of $\lambda_n$ as a zero of the function~$F$; recall that $F$ was extended by continuity to the set~$\sigma_0(A)$ by  formula~\eqref{eq:F-new}.

\begin{lemma}\label{lem:alg-mult0}
  Assume that $\lambda_n\in \sigma_0(A)$ is an eigenvalue of $B$ of geometric multiplicity~$1$ and algebraic multiplicity~$m\ge1$. Denote by~$l$ multiplicity of $\lambda_n$ as a zero of the function~$F$; then $m=l+1$.
\end{lemma}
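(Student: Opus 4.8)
The plan is to reduce the statement to Lemma~\ref{lem:alg-mult}, which handles eigenvalues in $\rho(A)$, by passing to an invariant subspace of~$B$ on which $A$ no longer sees the eigenvalue~$\lambda_n$; throughout, if $F(\lambda_n)\ne0$ we read $l=0$. Since $a_nb_n=0$, at least one of $a_n,b_n$ vanishes. If it is $a_n$ that vanishes (and $b_n\ne0$), I would first replace $B$ by its adjoint $B^{*}=A+\langle\,\cdot\,,\psi\rangle\varphi$: its characteristic function equals $\overline{F(\overline{\,\cdot\,})}$ and hence vanishes at the real point $\lambda_n$ to the same order~$l$; $\lambda_n$ has the same finite algebraic multiplicity for $B^{*}$ as for $B$, the two Riesz spectral projectors being mutually adjoint; and $\lambda_n$ is still geometrically simple for $B^{*}$ by Lemma~\ref{lem:geom-mult}. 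So I may assume $b_n=0$, i.e.\ $\psi\in H_n:=H\ominus\langle v_n\rangle$. If in addition $a_n=0$, then $F(\lambda_n)\ne0$ (otherwise $\lambda_n$ would be geometrically double by Lemma~\ref{lem:geom-mult}), so $l=0$; in general, whenever $l\ge1$ we must have $a_n\ne0$. Now $H_n$ is invariant for~$B$, one has $\ran(B-\lambda_n)\subseteq H_n$, and $B_n:=B|_{H_n}$ equals $A_n+\langle\,\cdot\,,\varphi_n\rangle\psi$, where $A_n:=A|_{H_n}$ is self-adjoint with simple discrete spectrum $\{\lambda_k:k\ne n\}$ and $\varphi_n$ is the orthogonal projection of $\varphi$ onto~$H_n$. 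Since $\psi\in H_n$, the characteristic function of~$B_n$ coincides with $F$ near $\lambda_n\in\rho(A_n)$, so Lemma~\ref{lem:eig-B}(a) and Lemma~\ref{lem:alg-mult} tell us that $\lambda_n$ is an eigenvalue of~$B_n$ of algebraic multiplicity exactly~$l$ when $l\ge1$, and $\lambda_n\in\rho(B_n)$ when $l=0$.

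For the inequality $m\le l+1$, I would take any chain of eigen- and associated vectors $y_0,\dots,y_p$ of~$B$ at~$\lambda_n$. From $y_{k-1}=(B-\lambda_n)y_k\in\ran(B-\lambda_n)\subseteq H_n$ we get $y_0,\dots,y_{p-1}\in H_n$, and on $H_n$ the operator $B-\lambda_n$ acts as $B_n-\lambda_n$, so $y_0,\dots,y_{p-1}$ is a chain of eigen- and associated vectors of~$B_n$ at~$\lambda_n$ of length~$p$. When $l=0$ this forces $p=0$ by injectivity of $B_n-\lambda_n$, and when $l\ge1$ it forces $p\le l$ since $\lambda_n$ is geometrically simple for~$B_n$ with algebraic multiplicity~$l$; in both cases $m\le l+1$ (and $m=1$ when $l=0$).

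For the reverse inequality $m\ge l+1$ I may assume $l\ge1$, hence $a_n\ne0$. I would choose a maximal chain $y_0,\dots,y_{l-1}$ of eigen- and associated vectors of~$B_n$ at~$\lambda_n$, of length~$l$; it is also such a chain for~$B$, and I want to prolong it by one vector having a nonzero $v_n$-component. The key fact is that $\psi\notin\ran(B_n-\lambda_n)$: this range equals $\langle u\rangle^{\perp}$ with $u:=(A_n-\lambda_n)^{-1}\varphi_n$ spanning $\ker(B_n^{*}-\lambda_n)$, and $\langle\psi,u\rangle=\langle(A_n-\lambda_n)^{-1}\psi,\varphi_n\rangle=F(\lambda_n)-1=-1\ne0$; likewise $y_{l-1}\notin\ran(B_n-\lambda_n)$ by maximality, i.e.\ $\langle y_{l-1},u\rangle\ne0$. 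Then $\alpha:=-\langle y_{l-1},u\rangle/\overline{a_n}$ is nonzero and makes $y_{l-1}-\alpha\overline{a_n}\psi$ orthogonal to~$u$, hence equal to $(B_n-\lambda_n)w$ for some $w\in H_n$, and $y_l:=\alpha v_n+w$ satisfies $(B-\lambda_n)y_l=\alpha\overline{a_n}\psi+(B_n-\lambda_n)w=y_{l-1}$ (using $(B-\lambda_n)v_n=\langle v_n,\varphi\rangle\psi=\overline{a_n}\psi$). Thus $y_0,\dots,y_{l-1},y_l$ is a chain of length $l+1$, so $m\ge l+1$ and, with the previous step, $m=l+1$.

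The step I expect to be the real content, rather than bookkeeping, is this prolongation in the last paragraph: showing that a maximal Jordan chain of~$B_n$ sitting inside~$H_n$ extends by exactly one further vector outside~$H_n$. It rests on the non-degeneracy $\langle\psi,u\rangle\ne0$ — equivalently $\psi\notin\ran(B_n-\lambda_n)$ — together with $a_n\ne0$, and it is precisely this one extra step that accounts for the difference $m-l=1$ between the algebraic multiplicity of~$\lambda_n$ and its order as a zero of~$F$.
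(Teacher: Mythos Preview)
Your proof is correct, and it takes a genuinely different route from the paper's.

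The paper handles the two cases $a_n=0$ and $b_n=0$ separately and, in each, constructs the Jordan chain explicitly from resolvent powers $y_k=(A_n-\lambda_n)^{-(k+1)}\psi$ (with a suitable $v_n$-component added to the last vector in Case~(b)), then proves by an induction entirely parallel to that of Lemma~\ref{lem:alg-mult} that any chain of length $m$ forces $F(\lambda_n)=\cdots=F^{(m-2)}(\lambda_n)=0$. You instead eliminate one case by passing to the adjoint, restrict to the invariant subspace $H_n$ (where $\lambda_n\in\rho(A_n)$), and \emph{invoke} Lemma~\ref{lem:alg-mult} for $B_n$ rather than redoing its induction; the upper bound $m\le l+1$ then drops out from the observation that the first $p$ vectors of any $B$-chain lie in $H_n$, while for the lower bound you extend a maximal $B_n$-chain by one vector via the Fredholm-type characterisation $\ran(B_n-\lambda_n)=\ker(B_n^{*}-\lambda_n)^{\perp}$ together with the non-degeneracies $\langle\psi,u\rangle=-1$ and $a_n\ne0$. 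Your approach is more structural and avoids repeating the computations of Lemma~\ref{lem:alg-mult}; the paper's approach is more hands-on and yields explicit formulas for the chain vectors. One small point you use tacitly and might state: $\ran(B_n-\lambda_n)$ is closed because $B_n$ has compact resolvent and $\lambda_n$ is an isolated eigenvalue of finite multiplicity, so the identification with $\langle u\rangle^{\perp}$ is legitimate.
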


\begin{proof}
 Denote by $y_0$ an eigenfunction of~$B$ corresponding to the eigenvalue~$\lambda_n$. Since by assumption $\lambda_n$ is a geometrically simple eigenvalue, $y_0$ is determined uniquely up to a constant factor. As was shown in the proof of Lemma~\ref{lem:eig-B},
 \[
    0 = \langle (B-\lambda_n)y_0,v_n \rangle = \langle y_0, \varphi \rangle  \langle \psi, v_n \rangle
	=  \langle y_0, \varphi \rangle b_n
 \]
 so that either $ \langle y_0, \varphi \rangle =0$ or $b_n=0$. We shall analyse these two cases separately.

 \textbf{Case (a)}: $ \langle y_0, \varphi \rangle =0$. Then $(B-\lambda_n)y_0 = (A-\lambda_n)y_0 = 0$, so that $y_0$ can be taken equal to $v_n$. As a result, $\overline{a_n}=\langle v_n, \varphi \rangle =0$, i.e., $\varphi \in H_n := H \ominus v_n$.

 If a vector $y_1$ is associated to the eigenvector~$y_0$, then $y_1$ should satisfy the relation
 \begin{equation}\label{eq:jordan-y1}
    (B-\lambda_n) y_1 = (A- \lambda_n) y_1 + \langle y_1, \varphi \rangle \psi = y_0 = v_n
 \end{equation}
 and is determined up to the eigenvector~$y_0=v_n$. Therefore if such a vector~$y_1$ exists, it can be chosen orthogonal to~$v_n$, i.e., from $H_n$. Then after taking the scalar product of $(B-\lambda_n)y_1$ with $v_n$ and recalling that  $\ran (A-\lambda_n) = H_n$, we conclude from~\eqref{eq:jordan-y1} that
 \[
      \langle y_1, \varphi \rangle \langle \psi,v_n \rangle = \langle y_1, \varphi \rangle b_n = 1.
 \]
 Thus $b_n$ must be nonzero and $\langle y_1, \varphi \rangle =1/b_n$.

 Denote  by $P_n$ the orthogonal projector onto the subspace~$H_n$
 and by $A_n$ the restriction of the operator~$A$ onto the subspace~$H_n$; then $\lambda_n$ is a resolvent point of~$A_n$. Applying $P_n$ to \eqref{eq:jordan-y1}, we conclude that
 \[
    (A_n -\lambda_n) y_1 + \frac1{b_n} P_n \psi = 0,
 \]
 so that $y_1 = -\frac1{b_n}(A_n - \lambda_n)^{-1}P_n \psi$. The norming condition~$\langle y_1 , \varphi \rangle = 1/b_n$ can now be recast as
 \[
    \langle (A_n - \lambda_n)^{-1}P_n \psi, \varphi \rangle + 1
        =\langle \psi, (A_n - \lambda_n)^{-1}\varphi \rangle + 1
        = 0
 \]
 and amounts to the equality $F(\lambda_n)=0$. The conclusion is that an associated vector $y_1$ exists if and only if $b_n \ne 0$ and $\lambda_n$ is a zero of $F$ (i.e. $l>0$). In particular, $l=0$ is equivalent to $m=1$ (recall that the case $a_n = b_n = F(\lambda_n)=0$ was excluded), and the equality $m=l+1$ is then satisfied.

 Assume therefore that $l>0$ and introduce the vectors
 \[
      y_k:= - \frac1{b_n}(A_n - \lambda_n)^{-k}P_n \psi, \qquad k\ge1.
 \]
 Then one sees that
 \[
    (B-\lambda_n) y_k = (A-\lambda_n) y_k + \langle y_k ,\varphi \rangle \psi
	= y_{k-1} + \langle y_k ,\varphi \rangle \psi
 \]
 and
 \[
    \langle y_k ,\varphi \rangle = - \frac1{b_n} \langle (A_n - \lambda_n)^{-k}P_n \psi ,\varphi \rangle
	= -\frac1{b_n(k-1)!}F^{(k-1)}(\lambda_n).
 \]
 It follows that the vectors $y_1, y_2,\dots,y_l$ form a chain of vectors associated to the eigenvector~$y_0$, so that the algebraic multiplicity $m$ of the eigenvalue~$\lambda_n$ is at least $l+1$.

 Conversely, as in the proof of Lemma~\ref{lem:alg-mult} one can show that in any chain $\tilde y_0, \tilde y_1, \dots, \tilde y_{m-1}$ of eigen- and associated vectors for $B$ the vectors $\tilde y_1, \dots, \tilde y_{m-1}$ are related to the above-constructed vectors $y_1, \dots, y_{m-1}$ via~\eqref{eq:tilde-y} and that $F(\lambda_n) = F'(\lambda_n) = \dots = F^{(m-2)}(\lambda_n)=0$. This shows that $l \ge m-1$ and completes the proof in the case (a).

 \textbf{Case (b)}: $b_n = 0$. Then $\psi$ belongs to $H_n = H \ominus v_n$ and thus the range
 $\ran(B - \lambda_n)$ of $B-\lambda_n$ is contained in $H_n$.
 We look for an eigenvector $y_0$ of~$B$ of the form $\alpha_0 v_n + z_0$ with $z_0 \in H_n$. Then $(A-\lambda_n)y_0 = (A_n - \lambda_n)z_0$, and $(B-\lambda_n)y_0=0$ can be written as
 \[
    (A_n - \lambda_n) z_0 + \langle y_0 , \varphi \rangle \psi =0,
 \]
 so that $z_0 = c(A_n - \lambda_n)^{-1}\psi$ with an appropriate constant~$c$. Substituting this $z_0$ into the above equation results in the relation
 \[
    c \psi + \bigl[\alpha_0 \overline{a_n} + c \langle (A_n - \lambda_n)^{-1}\psi , \varphi \rangle \bigr] \psi =0,
 \]
 yielding the equality
 \begin{equation}\label{eq:alpha0}
    c F(\lambda_n) + \alpha_0 \overline{a_n} =0.
 \end{equation}

 In order that for the eigenvector~$y_0$ there could exist an associated vector~$y_1$, it is necessary that $y_0 = (B-\lambda_n)y_1$ belong to $H_n$ and thus that $\alpha_0 = 0$ and $y_0=z_0$. Equation~\eqref{eq:alpha0} then yields $cF(\lambda_n)=0$, and since $c=0$ would lead to the contradiction that $y_0=z_0=0$, we conclude that necessarily $F(\lambda_n)=0$. In particular, $l=0$ gives $m=1$ as stated.

 Assume therefore that $l>0$, so that $F(\lambda_n)=0$. As the case $a_n=b_n =0$ was excluded earlier, we have $a_n\ne0$ and thus $\alpha_0=0$ by~\eqref{eq:alpha0} and $y_0= z_0 := (A_n - \lambda_n)^{-1}\psi$.

 We first show that $m\ge l+1$ by constructing a chain $y_1, \dots, y_l$ of vectors associated to this~$y_0$. Namely, take
  $y_k := (A_n - \lambda_n)^{-(1+k)}\psi$
 for $k=1,\dots, l-1$ and
  $y_l := \alpha_l v_n + (A_n - \lambda_n)^{-(1+l)}\psi$
 with an~$\alpha_l$ to be determined later. As in the proof of Case~(a) we find that
 \[
        (B-\lambda_n) y_k
            = (A_n - \lambda_n) y_k + \langle y_k,\varphi\rangle \psi
            = y_{k-1} + \frac{1}{k!} F^{(k)}(\lambda_n) \psi
            = y_{k-1}
 \]
 for $k =1, 2, \dots, l-1$. For $k=l$ we get
 \[
        (B-\lambda_n) y_l
            = (A_n - \lambda_n) y_l + \langle y_l,\varphi\rangle \psi
            = y_{l-1}
                + \bigl[\alpha_l \overline{a_n}
                        +\frac{1}{l!} F^{(l)}(\lambda_n)\bigr] \psi,
 \]
 and the equality $(B-\lambda_n) y_l = y_{l-1}$ is guaranteed by taking (recall that $a_n \ne0$)
 \[
    \alpha_l := - \frac{1}{\overline{a_n}l!} F^{(l)}(\lambda_n).
 \]

 It remains to show that $l \ge m-1$. We take a chain of eigen- and associated vectors $\tilde y_0, \dots, \tilde y_{m-1}$ of the maximal possible length~$m>1$. The equalities $(B-\lambda_n) \tilde y_k = \tilde y_{k-1}$ for $k=1, \dots, m-1$ show that the vectors $\tilde y_0, \dots, \tilde y_{m-2}$ belong to $H_n$. Without loss of generality we may assume that $\tilde y_0 = y_0$ and then prove by induction that with $c_k:= - \langle \tilde y_k, \varphi \rangle$ for $k=0,1,\dots, m-2$ we have
 \[
    \tilde y_k = y_k + c_1 y_{k-1} + \dots + c_k y_0
 \]
 with $y_k$ defined above and that $F(\lambda_n) = F'(\lambda_n)=\dots = F^{(k)}(\lambda_n)=0$.

 The base of induction was already set up: $\tilde y_0 = y_0$ and $F(\lambda_n) = 0$. Assume therefore that the claim holds for all indices $k$ less than $j$ with $0<j<m-2$ and rewrite the equality $(B-\lambda_n) \tilde y_j = \tilde y_{j-1}$ as
 \[
    (A_n-\lambda_n) \tilde y_j + \langle \tilde y_j, \varphi \rangle \psi = \tilde y_{j-1}.
 \]
 It follows that $\tilde y_j = (A_n - \lambda_n)^{-1} \tilde y_{j-1} - \langle y_j, \varphi \rangle y_0$, which by the induction assumption can be recast as
 \[
     \tilde y_j = y_j + c_1 y_{j-1} + \dots + c_j y_0.
 \]
 Since
 \[
    \frac1{k!}F^{(k)}(\lambda_n) = \langle y_k, \varphi \rangle
 \]
 for $k\in \mathbb{N}$ and the equalities $F(\lambda_n) = F'(\lambda_n) = \cdots = F^{(j-1)}(\lambda_n) = 0$ hold by assumption, we find that $\langle y_0, \varphi \rangle = -1$ and, by taking the scalar product with $\varphi$ in the above formula for $\tilde y_j$ that
 \[
    -c_j = \langle \tilde y_j, \varphi \rangle
        = \langle y_j, \varphi \rangle - c_j.
 \]
 Therefore $\langle y_j, \varphi \rangle = 0$ yielding the relation $F^{(j)}(\lambda_n) = 0$.

 This completes the induction step and shows that $\lambda_n$ is a zero of~$F$ of multiplicity at least~$m-1$. The proof is complete.
\end{proof}

\begin{example}\label{ex:multiple-EV}\rm 
In the Hilbert space~$L_2(0,2\pi)$, we consider a self-adjoint operator 
\[
		A = \frac1{i}\frac{d}{dx}
\]
subject to the periodic boundary condition $y(0)=y(2\pi)$. The spectrum of $A$ coincides with the set $\mathbb{Z}$, and an eigenfunction $v_n$ corresponding to the eigenvalue~$\lambda_n:=n$ is equal to $e^{inx}/\sqrt{2\pi}$. 

For every $m\in\mathbb{N}$, we shall construct a rank one perturbation~$\langle \,\cdot\,,\phi\rangle \psi$ so that the perturbed operator~$B$ has an eigenvalue~$\lambda_0=0$ of algebraic multiplicity $2m+1$ and simple eigenvalues $\mu_n = \lambda_n$ if $|n|>m$. More precisely, we take
\[
	\phi(x) = \sum_{k=-m}^m e^{ikx} = \frac{\sin(m+\tfrac12)x}{\sin(\tfrac12x)}
\]
and 
\[
	\psi(x) = \sum_{k=1}^m d_k \sin(kx)
\]
with coefficients $d_k$ to be determined.  Since $\langle \psi, v_0 \rangle = 0$, the corresponding chain of eigen- and associated vectors can be formed as in Case (b) of the above theorem. Namely, with $A_0$ standing for the restriction of~$A$ onto the space $H_0:= H \ominus v_0$, we take
\[
	y_k := A_0^{-(k+1)}\psi, \qquad k=0,\dots, 2m-1,
\]
and 
\[
	y_{2m}:= d_0v_0 + A_0^{-(2m+1)}\psi 
\]
for a suitable $d_0$. We next show that there is a unique set of $d_0,\dots,d_m$ for which the above $y_0,\dots, y_{2m}$ form a chain of eigen- and associated vectors of~$B$ and that there is no longer chains of eigen- and associated vectors corresponding to~$\lambda_0$. 

Notice that 
\[
	A_0^{-2l}\psi(x) = \sum_{k=1}^m \frac{d_k}{k^{2l}}\sin(kx)
\]
and 
\[
	A_0^{-2l+1} = -i \sum_{k=1}^m \frac{d_k}{k^{2l-1}}\cos(kx).
\]
It then follows that $y_{2l+1}$ are odd functions for all $l=0,\dots,m-1$, and as~$\phi$ is an even function, we find that $By_{2l+1} = A y_{2l+1} = y_{2l}$. On the other hand, the equalities
$By_{2l} = y_{2l-1}$ for $l=0,\dots,m$ amount to a non-singular system of $m+1$ linear equations in $m+1$ variables $d_0,d_1,\dots, d_m$,
\begin{equation}\label{eq:example-system}
	\sum_{k=1}^m \frac{d_k}{k^{2l+1}} = f_l, \quad l=0,1,\dots,m,
\end{equation}
with $f_0 = -i/(2\pi)$, $f_1 = \dots = f_{m-1} = 0$, and $f_m = -id_0/\sqrt{2\pi}$. 

Note that $d_0 \ne0$ as otherwise the system would be inconsistent, so that $y_{2m}$ does not belong to $H_0$ and thus the chain cannot be extended further. In view of Lemma~\ref{lem:jordan-chains}, this is true of any other chain of EAV's for the eigenvalue~$\lambda_0$. As $a_0\ne0$, geometric multiplicity of $\lambda_0=0$ is equal to one by Lemma~\ref{lem:geom-mult}; therefore, $\lambda_0$ is a geometrically simple eigenvalue of~$B$ of algebraic multiplicity~$2m+1$. 

The explicit form of $\phi$ and $\psi$ yields their Fourier coefficients: $a_n = b_n = 0$ if $|n|>m$, $a_n = \sqrt{2\pi}$ for $|n| \le m$, and, finally, $b_n = \sqrt{2\pi}d_n/2i$ for $n=1,\dots,m$, $b_n = -b_{-n}$ for $n=-m,\dots,-1$, and $b_0=0$. Then the characteristic function,
\[
	F(z) =  \sum_{n=-m}^m\frac{\overline{a_n}b_n}{n-z} + 1 
		 =  \sqrt{2\pi}\sum_{n=1}^m\frac{2nb_n}{n^2-z^2} + 1 
		 = \frac{2\pi}i\sum_{n=1}^m\frac{nd_n}{n^2-z^2} + 1 
\] 
is a rational function of the form $P(z)/Q(z)$ with $P$ and $Q$ polynomials of degree at most $2m$. Therefore, $F$ has at most $2m$ zeros counting with multiplicity. On the other hand, it is straightforward to verify that equations~\eqref{eq:example-system} amount to the relations 
\[
	F(0) = F'(0) = \dots = F^{(2m-1)}(0) = 0,
\]
so that $z = 0$ is a zero of~$F$ of multiplicity~$2m$. This implies that $F$ has no other zeros. In particular, $F(n)\ne0$ if $n\ne0$, and thus $\lambda_n = n$ is an algebraically simple eigenvalue of the operator~$B$ whenever $|n|>m$. 

To sum up, the operator~$B$ has an eigenvalue~$\lambda_0 = 0$ of algebraic multiplicity~$2m+1$ and simple eigenvalues $\lambda_n$ for $|n|>m$. Loosely speaking, the rank one perturbation shifts the eigenvalues $\lambda_{-m}, \dots, \lambda_{-1}$, $\lambda_1, \dots, \lambda_m$ towards $\lambda_0$ respectively enlarging the multiplicity of the latter. 
\end{example}


\section{Spectral localization of the operator~$B$}\label{sec:asympt}

We next turn to the question, what spectra the rank-one perturbations~$B$ of a given self-adjoint operator~$A$ can have. Keeping in mind the most important and interesting applications to the differential operators, in addition to~$(A1)$ we assume that
\begin{itemize}
	\item[(A2)] the eigenvalues of~$A$ are separated, i.e.,
	\begin{equation}\label{eq:dist}
	\inf_{n \in I} |\lambda_{n+1} - \lambda_n| =: d > 0.
	\end{equation}
\end{itemize}

We next localize the spectrum of~$B$ by studying its characteristic function
\begin{equation*}\label{eq:F1}
	F(z) = \sum_{k \in I_1} \frac {\overline{a_k} b_k}{\lambda_k - z} + 1.
\end{equation*}
As the Fourier coefficients $a_k$ and $b_k$ of the functions~$\phi$ and $\psi$ are in $\ell_2(I)$, the sequence $\overline{a_k}b_k$ is summable and, due to the Cauchy--Bunyakowsky--Schwarz inequality, its $\ell_1$-norm is bounded by $\|\varphi\|\|\psi\|$. 

\begin{lemma}
	The spectrum of $B$ lies in the strip 
	\[
		\Pi := \{z \in \bC \mid |\myIm z| \le \|\varphi\|\|\psi\|\}.
	\]
\end{lemma}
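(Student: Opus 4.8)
The plan is to show that no point $z$ with $|\myIm z| > \|\varphi\|\|\psi\|$ can lie in $\sigma(B)$. By Lemma~\ref{lem:eig-B}, the spectrum of $B$ splits into $\sigma_0(A) \subset \mathbb{R}$ and the zero set $\cN_F$ of the characteristic function $F$ in $\rho(A)$. Since every point of $\sigma_0(A)$ is real, it lies in $\Pi$ trivially, so it suffices to prove that $F(z) \ne 0$ whenever $|\myIm z| > \|\varphi\|\|\psi\|$. Note also that any such $z$ automatically lies in $\rho(A)$ because $\sigma(A) \subset \mathbb{R}$, so $F(z)$ is given by the series representation~\eqref{eq:F1}.

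The key estimate is a lower bound on $|F(z)|$ for $z$ outside the strip. Writing $z = x + iy$ with $|y| > \|\varphi\|\|\psi\|$, I would bound the sum termwise:
\[
	\Bigl| \sum_{k \in I_1} \frac{\overline{a_k} b_k}{\lambda_k - z} \Bigr|
		\le \sum_{k \in I_1} \frac{|a_k|\,|b_k|}{|\lambda_k - z|}
		\le \frac{1}{|y|} \sum_{k \in I_1} |a_k|\,|b_k|,
\]
where the last inequality uses $|\lambda_k - z| \ge |\myIm(\lambda_k - z)| = |y|$, valid since $\lambda_k \in \mathbb{R}$. By the Cauchy--Bunyakowsky--Schwarz inequality (as already noted in the text), $\sum_{k} |a_k|\,|b_k| \le \|\varphi\|\|\psi\|$. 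Hence the sum has modulus at most $\|\varphi\|\|\psi\|/|y| < 1$, and therefore
\[
	|F(z)| \ge 1 - \Bigl| \sum_{k \in I_1} \frac{\overline{a_k} b_k}{\lambda_k - z} \Bigr| \ge 1 - \frac{\|\varphi\|\|\psi\|}{|y|} > 0.
\]
Thus $z \notin \cN_F$, and consequently $z \notin \sigma(B)$. This shows $\sigma(B) \subset \Pi$.

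This proof is essentially a one-line estimate once the pieces are assembled, so there is no serious obstacle; the only point requiring a word of care is to observe that a point off the real axis is automatically in $\rho(A)$, so that $F$ there is genuinely given by its series~\eqref{eq:F1} (rather than the continued version $F$ on $\mathbb{C}\setminus\sigma_1(A)$), and to invoke Lemma~\ref{lem:eig-B}a) to pass from "$F(z) \ne 0$" to "$z \notin \sigma(B)$". Note the bound is not claimed to be sharp, and the boundary of the strip is included for safety even though the strict inequality $|y| > \|\varphi\|\|\psi\|$ is what the argument actually uses.
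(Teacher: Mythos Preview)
Your proof is correct and follows essentially the same argument as the paper: bound each term of the series for $F$ using $|\lambda_k - z| \ge |\myIm z|$, apply Cauchy--Bunyakowsky--Schwarz to the coefficient sequence, and conclude $|F(z)-1|<1$ whenever $|\myIm z| > \|\varphi\|\|\psi\|$. If anything, you are slightly more explicit than the paper in separating off the real part $\sigma_0(A)$ of the spectrum and in invoking Lemma~\ref{lem:eig-B}a) to pass from $F(z)\ne 0$ to $z\notin\sigma(B)$.
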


\begin{proof}
	If $z\not\in \Pi$, then $|\lambda_k - z| \ge |\myIm z| > \|\varphi\|\|\psi\|$, so that 
	\[
		\sum_{k \in I_1} \biggl|\frac {\overline{a_k} b_k}{\lambda_k - z}\biggr| 
			< \sum_{k \in I_1} |\overline{a_k} b_k|/(\|\varphi\|\|\psi\|) 
			< 1
	\]
	so that $F(z) \ne 0$. 
\end{proof}


Next, for an $\varepsilon>0$ we denote by $C_n(\varepsilon)$ the open circle
\[
	C_n(\varepsilon) := \{ z \in \bC \mid |z - \lambda_n| < \varepsilon\}
\]
and set 
\[
	R_{N, \varepsilon}:= \Bigl\{z \in \bC \mid |\myRe z| \ge  N\} 
		\setminus \Bigl(\bigcup\nolimits_{n\in I} C_n(\varepsilon) \Bigr)\Bigr\}
\]

\begin{lemma}\label{lem:RN}
For every $\varepsilon>0$ there is $N>0$ such that $R_{N,\varepsilon}$ belongs to the resolvent set of the operator~$B$.
\end{lemma}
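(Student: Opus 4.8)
The plan is to show that $|F(z)|$ stays bounded away from $0$ on $R_{N,\varepsilon}$ once $N$ is large, so that by Lemma~\ref{lem:eig-B}(a) no point of $R_{N,\varepsilon}$ can be an eigenvalue of $B$. Write the tail of the series: fix a large index cutoff $M$ (to be chosen depending on $\varepsilon$) and split
\[
	F(z) = 1 + \sum_{|k|\le M,\ k\in I_1} \frac{\overline{a_k}b_k}{\lambda_k - z}
		+ \sum_{|k|> M,\ k\in I_1} \frac{\overline{a_k}b_k}{\lambda_k - z}
	=: 1 + S_1(z) + S_2(z).
\]
Since $(\overline{a_k}b_k)\in\ell_1(I)$, given any $\eta>0$ we may choose $M$ so that $\sum_{|k|>M}|\overline{a_k}b_k| < \eta$; then for every $z$ with $|\myIm z|$ bounded (say $|\myIm z|\le \|\varphi\|\|\psi\|$, which we may assume by the previous lemma) and $\myRe z$ outside the circles $C_k(\varepsilon)$ we have $|\lambda_k - z|\ge\varepsilon$ for all $k$, hence $|S_2(z)| < \eta/\varepsilon$ uniformly on $R_{N,\varepsilon}$.

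For the finite sum $S_1$, the point is that each term tends to $0$ as $|\myRe z|\to\infty$: if $z\in R_{N,\varepsilon}$ then $|\lambda_k - z|\ge |\myRe z - \lambda_k|\ge N - |\lambda_k|$ for $|k|\le M$, so $|S_1(z)| \le \sum_{|k|\le M}|\overline{a_k}b_k|/(N-\max_{|k|\le M}|\lambda_k|)$, which is $< \eta$ once $N$ is large enough (here I use (A1) to know the $\lambda_k$ are finite; (A2) is not even needed for this lemma, though it guarantees the circles $C_k(\varepsilon)$ are disjoint for small $\varepsilon$, which keeps the geometry clean). Combining, for $z\in R_{N,\varepsilon}$ with $N$ chosen after $M$,
\[
	|F(z)| \ge 1 - |S_1(z)| - |S_2(z)| > 1 - \eta - \eta/\varepsilon,
\]
and choosing $\eta$ small (e.g. $\eta = \varepsilon/(2(1+\varepsilon))$ gives $|F(z)|>1/2$) makes the right-hand side positive. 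Hence $F$ has no zeros on $R_{N,\varepsilon}$; since $R_{N,\varepsilon}\subset\rho(A)$ for $\varepsilon<d/2$ (the circles cover neighbourhoods of all eigenvalues of $A$), Lemma~\ref{lem:eig-B}(a) shows $R_{N,\varepsilon}\subset\rho(B)$.

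There is no serious obstacle here; the only thing requiring a little care is the order of quantifiers — one must first fix $M$ to control the $\ell_1$-tail $S_2$ uniformly on the whole set $R_{N,\varepsilon}$, and only then choose $N$ large enough to kill the finitely many remaining terms in $S_1$. A secondary point worth stating explicitly is that $R_{N,\varepsilon}$ indeed lies in $\rho(A)$: for $\varepsilon < d/2$ the discs $C_n(\varepsilon)$ are pairwise disjoint by (A2) and cover all of $\sigma(A)$, so their complement misses $\sigma(A)$ entirely, which is what lets us invoke part (a) of Lemma~\ref{lem:eig-B} rather than worrying about the common eigenvalues in $\sigma_0(A)$.
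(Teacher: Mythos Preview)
Your proof is correct and follows the same split-and-estimate strategy as the paper: choose a cutoff so that the $\ell_1$-tail $S_2$ is uniformly small on the complement of the discs, then take $N$ large enough that the finitely many remaining terms in $S_1$ are also small, concluding $|F(z)|\ge 1/2$ on $R_{N,\varepsilon}$ and invoking Lemma~\ref{lem:eig-B}(a). One minor difference worth noting: the paper bounds $S_1$ using assumption~(A2) (via $|\lambda_k-\lambda_m|\ge d|k-m|$), whereas your direct estimate $|\lambda_k-z|\ge N-\max_{|j|\le M}|\lambda_j|$ uses only that finitely many eigenvalues are bounded in modulus; so you are right that (A2) is not actually needed here. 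Your closing restriction $\varepsilon<d/2$ is unnecessary, since every $\lambda_n$ lies in $C_n(\varepsilon)$ for any $\varepsilon>0$ and hence $R_{N,\varepsilon}\subset\rho(A)$ automatically --- but this is harmless.
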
	

\begin{proof}
For an $\varepsilon >0$, we choose $N'\in\mathbb{N}$ so that%
\begin{footnote}
	{Throughout this section, the symbol $\sum{\hspace*{-2pt}\vphantom{\sum}}^{(1)}$  denotes summation over the index set $I_1$}
\end{footnote} 
	\[
		\sumI_{|k|\ge N'} |\overline{a_k} b_k| \le \frac{\varepsilon}4; 
	\]
	then, for $z$ outside every circle $C_n(\varepsilon)$,
	\[
		\Bigl|\sumI_{|k|\ge N'} \frac{\overline{a_k} b_k}{\lambda_k - z}\Bigr|
			\le \frac1\varepsilon \sum_{k\in I_1, |k|\ge N'} |a_k b_k| \le \frac14.
	\]
	We now take $N''\in\mathbb{N}$ such that  $N''\ge N' + 4 \|\varphi\|\|\psi\|/d$ and choose $N\in \mathbb{N}$ such that $N\ge |\lambda_{N''}|$ and $N \ge |\lambda_{-N''}|$ if $-N'' \in I$. Due to Assumption~$(A2)$ it holds that $|\lambda_k - \lambda_m| \ge d|k-m|$; therefore, 
	$|\lambda_k - z| \ge d (N'' - N') \ge 4\|\varphi\|\|\psi\|$ whenever $z \in R_{N,\varepsilon}$ and $|k| \le N'$, so that  
	\[
	  	\Bigl|\sumI_{|k| < N'} \frac{\overline{a_k} b_k}{\lambda_k - z}\Bigr| 
	  		\le \frac14
	\]
	for such $z$. 
	As a result, for all $z \in R_{N,\varepsilon}$ it holds
	\[
			|F(z)| \ge 1 - 	\Bigl|\sum_{k\in I_1} \frac{\overline{a_k} b_k}{\lambda_k - z}\Bigr| 
				\ge \frac12;
	\]
	by Lemma~\ref{lem:eig-B} the set $R_{N,\varepsilon}$ is in the resolvent set of~$B$, and the proof is complete.
\end{proof}

Combining the above two lemmata, we conclude that the spectrum of $B$ is localized in the circles $C_n(\varepsilon)$ and in the rectangular domain
\[
	\{z \in \bC \mid |\myRe|\le N, \ |\myIm z| \le \|\varphi\|\|\psi\|\},
\]
with $N=N(\varepsilon)$ from Lemma~\ref{lem:RN}. 

\begin{lemma}\label{lem:EVinCn}
	For every $\varepsilon>0$ there is $K=K(\varepsilon)$ such that for each $n\in I$ with $|n| > K(\varepsilon)$ the circle $C_n(\varepsilon)$ contains precisely one eigenvalue of~$B$.
\end{lemma}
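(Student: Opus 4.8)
The plan is to localize the zeros of $F$ near each far-away eigenvalue $\lambda_n$ by separating the single ``resonant'' term $\overline{a_n}b_n/(\lambda_n-z)$ from the rest of the series and applying Rouch\'e's theorem on the boundary of $C_n(\varepsilon)$. First I would fix $\varepsilon>0$ and, shrinking it if necessary, assume $2\varepsilon<d$ so that the circles $C_n(\varepsilon)$ are pairwise disjoint and disjoint from all other eigenvalues of $A$. Write $F(z) = g_n(z) + h_n(z) + 1$ where $g_n(z) := \overline{a_n}b_n/(\lambda_n-z)$ (with $g_n\equiv 0$ if $n\notin I_1$) and $h_n(z) := \sum_{k\in I_1,\,k\ne n} \overline{a_k}b_k/(\lambda_k - z)$. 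On the circle $|z-\lambda_n|=\varepsilon$ we have $|g_n(z)+1|$ bounded below: indeed $g_n(z)+1 = (\lambda_n - z + \overline{a_n}b_n)/(\lambda_n - z)$, and I would split into the case $n\in I_1$ (where the numerator is a degree-one polynomial in $z$ with a single zero at $z=\lambda_n+\overline{a_n}b_n$, so $|g_n(z)+1| \ge \operatorname{dist}(\lambda_n+\overline{a_n}b_n,\partial C_n(\varepsilon))/\varepsilon$, which is bounded below as long as $|\overline a_nb_n|\ne\varepsilon$; one adjusts $\varepsilon$ to avoid finitely many bad values, or better, just note $|g_n(z)+1|\ge 1 - |\overline a_n b_n|/\varepsilon$ once $|\overline a_nb_n|<\varepsilon$, which holds for all but finitely many $n$ since $\overline a_nb_n\to 0$) and the trivial case $n\notin I_1$ (where $g_n+1\equiv 1$).

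The key estimate is that $h_n$ is uniformly small on $\partial C_n(\varepsilon)$ for $|n|$ large. This follows exactly as in the proof of Lemma~\ref{lem:RN}: split $h_n$ into the tail $\sum_{|k|\ge N',\,k\ne n}$ and the head $\sum_{|k|<N',\,k\ne n}$; the tail is $\le \tfrac1\varepsilon \sumI_{|k|\ge N'}|\overline a_kb_k|$, made $\le \tfrac\varepsilon4\cdot\tfrac1\varepsilon$ small by choosing $N'$ via summability; the head has each denominator $|\lambda_k - z|\ge d(|n|-N') - \varepsilon$ by Assumption~(A2), which tends to infinity as $|n|\to\infty$, so the (fixed, finite) head sum $\le \|\varphi\|\|\psi\|/(d(|n|-N')-\varepsilon)$ is $\le$ any prescribed small number once $|n|$ exceeds some $K(\varepsilon)$. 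Hence on $\partial C_n(\varepsilon)$ we get $|h_n(z)| < |g_n(z)+1|$ for all $|n|>K(\varepsilon)$, provided $K$ is chosen large enough to also absorb the finitely many indices where $|\overline a_nb_n|\ge\varepsilon$ might fail the lower bound.

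By Rouch\'e's theorem, $F = (g_n+1) + h_n$ and $g_n + 1$ have the same number of zeros inside $C_n(\varepsilon)$, counted with multiplicity. When $n\in I_1$, $g_n+1$ has exactly one zero there (at $\lambda_n + \overline a_nb_n$, which lies in $C_n(\varepsilon)$ since $|\overline a_nb_n|<\varepsilon$); when $n\notin I_1$, $g_n+1\equiv 1$ has no zero, but then $\lambda_n\in\sigma_0(A)$ is itself an eigenvalue of $B$ (Lemma~\ref{lem:eig-B}(b)) of multiplicity equal to $1 + (\text{order of }\lambda_n\text{ as a zero of }F)$ by Lemma~\ref{lem:alg-mult0}, and the same Rouch\'e count applied to the continued $F$ of~\eqref{eq:F-new} shows this order is $0$ for $|n|$ large, so again exactly one eigenvalue. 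Combining with Lemma~\ref{lem:alg-mult} (which identifies, for $\lambda\in\rho(A)$, the algebraic multiplicity of an eigenvalue of $B$ with its order as a zero of $F$), each $C_n(\varepsilon)$ with $|n|>K(\varepsilon)$ contains exactly one eigenvalue of $B$, counted with algebraic multiplicity, hence a simple one. The main obstacle is organizing the lower bound on $|g_n(z)+1|$ uniformly: one must handle cleanly both the regime $n\in I_1$ versus $n\notin I_1$ and the finitely many indices where $|\overline a_nb_n|$ is not yet small, which is why the statement is phrased with a threshold $K(\varepsilon)$ rather than holding for all $n$.
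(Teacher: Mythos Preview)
Your proof is correct, but it takes a different route from the paper. The paper does not touch the characteristic function at all inside $C_n(\varepsilon)$; instead it compares the Riesz spectral projectors
\[
   P_n  = \frac1{2\pi i} \int_{\partial C_n(\varepsilon)} (A - z)^{-1}\,dz,
   \qquad
   P'_n = \frac1{2\pi i} \int_{\partial C_n(\varepsilon)} (B - z)^{-1}\,dz
\]
directly, using the Krein resolvent formula to write $P_n-P'_n$ as a contour integral of a rank-one operator of norm at most $2\|(A-\overline z)^{-1}\varphi\|\,\|(A-z)^{-1}\psi\|$, and then invokes dominated convergence to show this product tends to zero uniformly on $\partial C_n(\varepsilon)$ as $|n|\to\infty$. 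Once $\|P_n-P'_n\|<1$, the projectors have equal rank, namely~$1$, and the conclusion follows in one stroke.

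By contrast, you work entirely with the scalar function~$F$ and Rouch\'e's theorem, isolating the resonant term $\overline{a_n}b_n/(\lambda_n-z)$ and bounding the remainder. Your argument is more elementary and has the pleasant by-product of locating the eigenvalue explicitly (near $\lambda_n+\overline{a_n}b_n$), but it forces you to split into the cases $n\in I_1$ and $n\in I_0$ and, in the latter case, to feed the Rouch\'e count back through Lemmata~\ref{lem:alg-mult} and~\ref{lem:alg-mult0} to conclude that $\lambda_n$ itself is an algebraically simple eigenvalue and nothing else hides nearby. The paper's projector argument avoids both the case split and the dependence on the multiplicity lemmas, which makes it more portable (for instance to non-simple unperturbed spectra); your argument, on the other hand, stays entirely within one-variable complex analysis and gives a more hands-on picture of where the perturbed eigenvalue sits. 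Your explicit reduction to $2\varepsilon<d$ is in fact needed in the paper's argument as well (otherwise $P_n$ may have rank larger than~$1$), so your care there is appropriate.
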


\begin{proof}
	By Lemma~\ref{lem:RN}, for all $n$ with large enough $|n|$, the boundary $\partial C_n(\varepsilon)$ of~$C_n(\varepsilon)$ is in the resolvent set of~$B$. We next show that the Riesz spectral projections for $A$ and $B$ corresponding to $C_n(\varepsilon)$ are of the same rank (and thus of rank~$1$) for large enough~$|n|$.
	
	 For every $n$ with $\partial C_n(\varepsilon) \subset \rho(B)$, we denote by $P_n$ and $P'_n$ the Riesz spectral projectors for $A$ and $B$ respectively on the root subspaces corresponding to the eigenvalues inside $C_n(\varepsilon)$,
	\[
		P_n  = \frac1{2\pi i} \int_{C_n(\varepsilon)} (A - z)^{-1}\,dz, \qquad 
		P'_n = \frac1{2\pi i} \int_{C_n(\varepsilon)} (B - z)^{-1}\,dz.
	\]
	By the Krein resolvent formula~\eqref{eq:Krein}, we get 
	\[
		P_n - P'_n = \frac1{2\pi i} \int_{C_n(\varepsilon)}\frac{dz}{F(z)} \langle \, \cdot \,, (A - \overline{z})^{-1} \varphi \rangle
		(A-z)^{-1} \psi.
	\]
	As the norm of a rank-one operator $\langle \, \cdot \, u \rangle v$ is equal to $\|u\|\|v\|$ and, as proved in Lemma~\ref{lem:RN}, $|F(z)|\ge 1/2$ on $C_n(\varepsilon)$ for large enough $|n|$, we conclude that 
	\[
		\|P_n - P'_n\| \le d \max_{z\in C_n(\varepsilon)} \|(A - \overline{z})^{-1} \varphi\| \|(A - {z})^{-1} \psi\|
	\]
	for such $n$. Observe now that for every vector $u = \sum c_k v_k$ we have 
	\[
		\|(A - z)^{-1} u \|^2 = \sum_{k\in I} \frac{|c_k|^2}{|\lambda_k - z|^2};
	\]
	applying the Lebesgue dominated convergence theorem, we conclude that
	\[
		\max_{z\in C_n(\varepsilon)} \|(A - z)^{-1} u \|^2 \to 0
	\]
	as $|n| \to \infty$. Therefore, $\|P_n - P'_n\| \to 0$ as $|n|\to\infty$; as a result~\cite[\S IV.2]{Kat95}, the ranks of the Riesz projectors $P_n$ and $P'_n$ coincide for all $n$ with large enough $|n|$, and the proof is complete.
\end{proof}

Therefore, the operator~$B$ has at most finitely many nonsimple eigenvalues; we next prove that there are no other restrictions on them.

\begin{lemma}\label{lem:nonrealEV}
	Fix an arbitrary $n\in\mathbb{N}$, an arbitrary sequence $z_1, z_2, \dots, z_n$ of pairwise distinct complex numbers, and an arbitrary sequence $m_1$, $m_2$, $\dots$, $m_n$ of natural numbers. Then there is a rank-one perturbation~$B$ of the operator~$A$ such that, for every $j=1,2,\dots, n$, the number $z_j$ is an eigenvalue of~$B$ of algebraic multiplicity~$m_j$. 
\end{lemma}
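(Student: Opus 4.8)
The plan is to solve an inverse problem for the characteristic function. I would take $\varphi$ and $\psi$ supported on finitely many eigenvectors, with Fourier coefficients $a_k,b_k$ nonzero precisely for $k$ in a finite index set $S$. Setting $c_k:=\overline{a_k}b_k$ and $q(z):=\prod_{k\in S}(\lambda_k-z)$, formula~\eqref{eq:F-new} turns the characteristic function into a rational function
\[
    F(z)=1+\sum_{k\in S}\frac{c_k}{\lambda_k-z}=\frac{p(z)}{q(z)},
\]
with $p$ a polynomial of degree $|S|$ having the same leading coefficient $(-1)^{|S|}$ as $q$; a partial-fraction computation gives $c_k=p(\lambda_k)/\prod_{j\in S\setminus\{k\}}(\lambda_j-\lambda_k)$. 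Conversely, every polynomial $p$ with $\deg p=|S|$, leading coefficient $(-1)^{|S|}$, and $p(\lambda_k)\ne0$ for all $k\in S$ arises from an admissible pair (take, say, $b_k\equiv1$ and $a_k:=\overline{c_k}$ on $S$), which is nonzero once $S\ne\emptyset$. By part~a) of Lemma~\ref{lem:eig-B} together with Lemma~\ref{lem:alg-mult}, a zero of $F$ in $\rho(A)$ is an eigenvalue of the resulting operator $B$ whose algebraic multiplicity equals its order as a zero of $F$ --- equivalently, of $p$, at points where $q$ does not vanish. Thus everything reduces to choosing $p$ with prescribed zeros.

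I would then separate the prescribed points lying in $\sigma(A)$. Let $K:=\{k\in I:\lambda_k\in\{z_1,\dots,z_n\}\}$; relabelling, let $J_2\subseteq\{1,\dots,n\}$ consist of the indices $j$ with $z_j=\lambda_{k_j}$, $k_j\in K$ (the $k_j$ being distinct, since the $z_j$ are), and put $J_1:=\{1,\dots,n\}\setminus J_2$, so that $z_j\in\rho(A)$ for $j\in J_1$. Set $D:=\sum_{j\in J_1}m_j+\sum_{j\in J_2}(m_j-1)$ and $L:=\max(D,1)$. Choose a finite $S\subset I\setminus K$ with $|S|=L$ and a monic polynomial $r$ of degree $L-D$ (so $r\equiv1$ unless $D=0$) whose sole root, if any, avoids the finitely many numbers $\lambda_k$ ($k\in S$) and $z_1,\dots,z_n$; then put
\[
    p(z):=(-1)^{L}\,r(z)\prod_{j\in J_1}(z-z_j)^{m_j}\prod_{j\in J_2}(z-\lambda_{k_j})^{m_j-1}.
\]
One checks that $\deg p=L$ with leading coefficient $(-1)^L$, that $p(\lambda_k)\ne0$ for every $k\in S$ (each zero of $p$ is either a root of $r$, or some $z_j\notin\sigma(A)$, or some $\lambda_{k_j}$ with $k_j\in K$ disjoint from $S$), and hence --- for the operator $B$ produced as above --- that $I_1=S$ and $F$ vanishes to order exactly $m_j$ at $z_j$ for $j\in J_1$ and to order exactly $m_j-1$ at $\lambda_{k_j}$ for $j\in J_2$, all these being points where $q$, hence $F$, is regular.

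Finally I would read off the multiplicities. For $j\in J_1$, $z_j\in\rho(A)$ is a zero of $F$ of order $m_j$, so Lemma~\ref{lem:alg-mult} gives algebraic multiplicity $m_j$ at once. For $j\in J_2$ I use Remark~\ref{rem:multiplicity}: since $a_k=b_k=0$ for all $k\notin S$, the closed span $H^0$ of $\{v_k:k\notin S\}$ reduces $B$ with $B|_{H^0}=A|_{H^0}$, and the finite-dimensional complement $H^1=H\ominus H^0=\operatorname{span}\{v_k:k\in S\}$ likewise reduces $B$, which acts on it as a rank-one perturbation of the self-adjoint operator $A|_{H^1}$ (of simple spectrum $\{\lambda_k:k\in S\}$) with the very same characteristic function $F$. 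Therefore the root subspace of $B$ at $\lambda_{k_j}$ is the direct sum of those of $B|_{H^0}$ and $B|_{H^1}$: the first is $\langle v_{k_j}\rangle$ and has dimension $1$, while for the second $\lambda_{k_j}\in\rho(A|_{H^1})$ is a zero of $F$ of order $m_j-1$ and so, by Lemma~\ref{lem:alg-mult} applied inside $H^1$, contributes $m_j-1$. Hence $z_j=\lambda_{k_j}$ is an eigenvalue of $B$ of algebraic multiplicity $1+(m_j-1)=m_j$, which finishes the construction.

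The main point to get right is this last case: a prescribed $z_j$ that is already an eigenvalue of $A$ is \emph{automatically} an eigenvalue of $B$ via the inert reducing part $H^0$, so its target multiplicity must be split as $1$ (from $H^0$) plus a zero of $F$ of order $m_j-1$, rather than realized as a zero of $F$ of order $m_j$; this is why $p$ carries the exponents $m_j-1$ on the $\lambda_{k_j}$, and why one sets $L=\max(D,1)$ to keep $S$ --- hence $\varphi$ and $\psi$ --- nonempty in the degenerate situation $D=0$. When all $z_j$ lie in $\rho(A)$ the whole argument collapses to taking $p(z)=(-1)^{M}\prod_{j=1}^{n}(z-z_j)^{m_j}$ with $M=m_1+\dots+m_n$ and any index set $S$ of cardinality $M$.
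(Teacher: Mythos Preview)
Your proof is correct but follows a genuinely different route from the paper's. The paper (assuming for simplicity that all $z_j$ lie in $\rho(A)$) sets up the conditions $F^{(m)}(z_j)=0$ as a linear system $\sum_{k=1}^{N} c_k(\lambda_k-z_j)^{-m}=-\delta_{m1}$ in the unknowns $c_k=\overline{a_k}b_k$ and then invokes a separate lemma (Lemma~\ref{lem:Cauchy}) computing the determinant of a confluent Cauchy matrix to show the system is uniquely solvable. You instead write down the desired characteristic function directly as a rational function $p/q$ with a hand-picked numerator and recover the $c_k$ by partial fractions, which bypasses the determinant calculation entirely; this is in fact the method the paper itself adopts later in the finite-dimensional Theorem~\ref{thm:finite-dim}. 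Your treatment of the case $z_j\in\sigma(A)$ via the reducing decomposition $H=H^0\oplus H^1$ of Remark~\ref{rem:multiplicity} is also more explicit than the paper's, which simply notes that ``the changes to be made otherwise are not very significant'' and points to Lemma~\ref{lem:alg-mult0}; your argument avoids that lemma because on $H^1$ the point $\lambda_{k_j}$ lies in the resolvent set of $A|_{H^1}$, so Lemma~\ref{lem:alg-mult} applies directly. What the paper's linear-system approach buys in return is the uniqueness of the coefficients $c_k$ for a fixed index set, a feature exploited in the finite-dimensional refinement of Theorem~\ref{thm:finite-dim}; your construction gives existence more cheaply but does not by itself yield that uniqueness.
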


\begin{proof}
	For simplicity, we assume that none of $z_j$ is in the spectrum of~$A$; the changes to be made otherwise are not very significant, cf.~Lemma~\ref{lem:alg-mult0} and Example~\ref{ex:multiple-EV}.
	
	Set $N:= m_1 + m_2 + \dots + m_n$; we will construct a rank-one perturbation~$B$ of $A$ with 
	\[
		\varphi = \sum_{k=1}^N a_k v_k, \qquad 		
		\psi = \sum_{k=1}^N b_k v_k.
	\]
	According to Lemma~\ref{lem:eig-B}, it suffices to choose $a_k$ and $b_k$ in such a way that the characteristic function~$F$ of~\eqref{eq:F-new} has zeros $z_1, z_2, \dots, z_n$ of multiplicity $m_1, m_2, \dots, m_n$ respectively. Set $c_k := \overline{a_k}b_k$, $k=1,2,\dots,n$; then
	\[
		F(z) = \sum_{k=1}^n \frac{c_k}{\lambda_k - z} + 1,
	\]
	and the equalities $F(z_k) = F'(z_k) = \dots = F^{(m_k-1)}(z_k) = 0$ lead to an inhomogeneous system of $N$ equations in the variables $c_1, c_2, \dots, c_N$:
	\begin{equation}\label{eq:system}
		\sum_{k=1}^N \frac{c_k}{(\lambda_k - z_j)^m} + \delta_{m1}= 0, \quad j = 1,2, \dots, n, \quad m = 1, 2, \dots, m_j,
	\end{equation}
	with $\delta_{m1}$ being the Kronecker delta. By Lemma~\ref{lem:Cauchy} below, the coefficient matrix of the above system is non-singular; therefore, the system possesses a unique solution~$c_1,c_2, \dots, c_N$. It remains to take $a_k =1$ and $b_k = c_k$ for $k=1,2,\dots, N$, and the proof is complete.
\end{proof}

\begin{lemma}\label{lem:Cauchy}
	The coefficient matrix of system~\eqref{eq:system} is non-singular.
\end{lemma}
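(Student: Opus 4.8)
The plan is to prove non-singularity by contradiction, translating the linear-algebra statement into one about zeros of a rational function. Suppose the coefficient matrix of~\eqref{eq:system} is singular; then its kernel contains a nonzero vector $(c_1,\dots,c_N)$, so the scalars $c_k$ satisfy
\[
  \sum_{k=1}^N \frac{c_k}{(\lambda_k - z_j)^m} = 0, \qquad j=1,\dots,n,\quad m=1,\dots,m_j.
\]
The device I would use is the rational function
\[
  G(z) := \sum_{k=1}^N \frac{c_k}{\lambda_k - z},
\]
which has at worst simple poles, located at the pairwise distinct eigenvalues $\lambda_1,\dots,\lambda_N$. Since $\tfrac{d^{m-1}}{dz^{m-1}}(\lambda_k-z)^{-1} = (m-1)!\,(\lambda_k-z)^{-m}$, the displayed relations are exactly the conditions $G(z_j)=G'(z_j)=\dots=G^{(m_j-1)}(z_j)=0$; that is, $G$ has a zero of order at least $m_j$ at each $z_j$.

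Next I would run a degree count. Over the common denominator, $G(z)=P(z)/\prod_{k=1}^N(\lambda_k-z)$ with $P(z)=\sum_{k=1}^N c_k\prod_{l\ne k}(\lambda_l-z)$ a polynomial of degree at most $N-1$. Because the points $z_1,\dots,z_n$ are distinct and (by the standing hypothesis of Lemma~\ref{lem:nonrealEV}) disjoint from $\{\lambda_1,\dots,\lambda_N\}$, the denominator is nonzero at every $z_j$, so $P$ itself must vanish to order at least $m_j$ at $z_j$. Hence $P$ is divisible by $\prod_{j=1}^n(z-z_j)^{m_j}$, a polynomial of degree $m_1+\dots+m_n=N$; a polynomial of degree at most $N-1$ divisible by one of degree $N$ is identically zero, so $P\equiv0$ and thus $G\equiv0$. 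But the residue of $G$ at each pole $\lambda_k$ equals $-c_k$, so $G\equiv0$ forces $c_1=\dots=c_N=0$, contradicting our choice; therefore the matrix is non-singular.

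The argument is really a statement about confluent Cauchy matrices, and the only delicate point is the bookkeeping in this degree count — that the numerator $P$ has degree strictly less than the degree of the divisor forced on it by the interpolation conditions, which is exactly where the contradiction is squeezed out. One could instead quote the classical closed-form determinant of a confluent Cauchy matrix and check that it is nonzero under the standing hypotheses, but the contradiction argument above is shorter and self-contained, so that is the route I would take.
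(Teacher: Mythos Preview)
Your proof is correct. It is, however, a genuinely different route from the paper's. The paper proves Lemma~\ref{lem:Cauchy} by computing the determinant explicitly: it starts from the classical Cauchy determinant for the matrix with entries $1/(\lambda_k-\omega_j)$, then successively differentiates in the variables $\omega_j$ and lets them coalesce in groups to the points $z_1,\dots,z_n$, tracking the product formula at each step until it reaches a closed-form expression for the confluent Cauchy determinant, visibly nonzero under the standing hypothesis that the $z_j$ avoid $\sigma(A)$. Your argument bypasses this computation entirely: you interpret a null vector of the coefficient matrix as the vector of residues of a rational function $G$ with at most simple poles at $\lambda_1,\dots,\lambda_N$ and zeros of total multiplicity $N$ at the $z_j$, then use the degree bound $\deg P\le N-1$ on the numerator to force $G\equiv0$. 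Your approach is shorter, self-contained, and conceptually cleaner; the paper's approach, on the other hand, yields the exact value of the determinant as a by-product, which can be useful if one later needs quantitative control. Both arguments rely in the same way on the simplifying assumption in Lemma~\ref{lem:nonrealEV} that the $z_j$ are disjoint from $\{\lambda_1,\dots,\lambda_N\}$.
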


\begin{proof}
	For pairwise distinct numbers $\omega_1, \omega_2, \dots, \omega_N$ from the resolvent set of~$A$, we introduce the Cauchy matrix $M$ with entries 
	\[
	(M)_{jk} = \frac{1}{\lambda_k - \omega_j}.
	\]
	It is non-singular and has determinant equal to 
	\begin{equation}\label{eq:Cauchy}
	D(\omega_1,\omega_2,\dots,\omega_{N}) = \frac{\prod\prod_{j>k}(\lambda_j - \lambda_k)(\omega_j-\omega_k)} 		{\prod_j\prod_k(\lambda_j-\omega_k)}.
	\end{equation}
	We set $C:= \prod\prod_{j>k}(\lambda_j - \lambda_k)$ for brevity. 
	
	Taking the derivative of that determinant in $\omega_2$ and setting $\omega_2 = \omega_1 = z_1$, we get the determinant of the matrix~$M_2$, whose first and second rows have entries 
	\[
			\frac1{\lambda_k - z_1 } \quad \text{and} \quad \frac1{(\lambda_k - z_1)^2}, \qquad k = 1, 2, \dots, N,
	\]
	respectively, and the other rows are as in the matrix~$M$. By~\eqref{eq:Cauchy}, we have 
	\[
		D(\omega_1,\omega_2,\dots,\omega_{N}) = (\omega_2 - \omega_1) D_2(\omega_1,\omega_2,\dots,\omega_{N}),
	\] 
	so that 
	\[
		\frac{\partial}{\partial \omega_2}D(z_1,\omega_2,\dots,\omega_{N})\Bigr|_{\omega_2 = z_1}
			= D_2(z_1, z_1, \omega_3, \dots, \omega_N).
	\]
	Explicit calculations give 
	\begin{multline*}
		\det M_2 = D_2(z_1, z_1, \omega_3, \dots, \omega_N) \\
			= C \prod_{j>2}(\omega_j-z_1)^2 \frac{\prod\prod_{j>k>2}(\omega_j-\omega_k)}
			{\prod_j (\lambda_j-z_1)^2\prod_{k>2}(\lambda_j-\omega_k)}
			\ne 0.
	\end{multline*}
	
	Next, we take the second derivative of $D_2(z_1, z_1, \omega_3, \dots, \omega_N)$ in $\omega_3$ and set $\omega_3 = z_1$; this becomes the determinant $D_3(z_1, z_1, z_1, \omega_4, \dots, \omega_N)$ of the matrix $M_3$ that is $M_2$ with its third row replaced by
	\[
		\frac2{(\lambda_k - z_1)^3}, \qquad k = 1, 2, \dots, N.
	\]
	On the other hand, 
	\begin{multline*}
		\det M_3 = D_3(z_1, z_1, z_1, \omega_4, \dots, \omega_N) 
			= \frac{\partial^2}{\partial \omega^2_3}D(z_1, z_1, \omega_3, \dots,\omega_{N})\Bigr|_{\omega_3 = z_1} \\
			= 2C \prod_{j>3}(\omega_j-z_1)^3 \frac{\prod\prod_{j>k>3}(\omega_j-\omega_k)}
				{\prod_j (\lambda_j-z_1)^3\prod_{k>3}(\lambda_j-\omega_k)}
			\ne 0.
	\end{multline*}
	On each next step, we repeat a similar procedure with the next row and variable until we reach row number $m_1$. 
	
	After that, we set $\omega_{m_1+1} = z_2$, take the derivative in $\omega_{m_1+2}$ at $\omega_{m_1 + 2} = z_2$, and repeat with the subsequent rows until we reach row number $m_1 + m_2$. Clearly, the operations described above can be performed on separate groups of variables $\omega_l$ with $l=m_1 + \dots + m_j + 1, m_1 + \dots + m_j + 2, \dots, m_1 + m_2 + \dots + m_{j+1}$ independently. At the end, the determinant of the coefficient matrix of the system~\eqref{eq:system} is found explicitly to be
	\[
		\frac{\prod_{j=k+1}^N\prod_{k=1}^N(\lambda_j - \lambda_k)	\prod_{j=k+1}^n\prod_{k=1}^n(z_j-z_k)^{m_j + m_k}}
		{\prod_{j=1}^N\prod_{k=1}^n(\lambda_j-z_k)^{m_j}} \ne 0,
	\]
	and the proof is complete.
\end{proof}

\begin{remark}
	In the paper~\cite{DobHry20}, it is proved that the operators $A$ and $B$ have the same number of eigenvalues in special increasing  rectangles exhausting the whole complex plane~$\bC$. Combined with the results of Lemmata~\ref{lem:RN} and \ref{lem:EVinCn}, this allows an enumeration of the eigenvalues of~$B$ as $\mu_n$, $n\in I$, such that each value $\mu_n$ is repeated according to its multiplicity and $\mu_n- \lambda_n \to0$ as $|n|\to\infty$. 
\end{remark}

We summarize the above results in the following theorem.

\begin{theorem}\label{thm:main}
	Assume that $A$ is an operator in a Hilbert space~$H$ satisfying assumptions~$(A1)$ and $(A2)$ and $B$ is its rank-one perturbation~\eqref{eq:B}. Then 
	\begin{itemize}
		\item[(i)] all eigenvalues of $B$ of sufficiently large absolute value are localized within $\varepsilon$-neighbourhood of the eigenvalues of~$A$ and thus are simple;
		\item[(ii)] the eigenvalues of~$B$ can be enumerated as $\mu_n$, $n\in I$, so that $\mu_n-\lambda_n \to 0$ as $|n|\to\infty$;
		\item[(iii)] geometric multiplicity of every eigenvalue of~$B$ is at most~$2$, and multiplicity~$2$ is only possible when the corresponding eigenspace of~$A$ is reducing for $B$.
	\end{itemize}
	Moreover, for every prescribed finite set $z_1, z_2, \dots, z_n$ of pairwise distinct complex numbers, and an arbitrary sequence $m_1$, $m_2$, $\dots$, $m_n$ of natural numbers there exists a~$B$ such that each $z_j$, $j=1,2,\dots, n$, is an eigenvalue of~$B$ of algebraic multiplicity~$m_j$.
\end{theorem}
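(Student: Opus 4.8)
The plan is to assemble Theorem~\ref{thm:main} directly from the lemmata already established, treating the four assertions in turn and then the final existence statement.

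First, for assertions~(i) and~(ii): after reducing to the case $H^0=\{0\}$ as in Remark~\ref{rem:multiplicity}, fix $\varepsilon>0$ small enough that the circles $C_n(\varepsilon)$ are pairwise disjoint (possible by assumption~$(A2)$, taking $\varepsilon<d/2$). Lemma~\ref{lem:RN} then places all eigenvalues of~$B$ outside the rectangular domain inside the union of the $C_n(\varepsilon)$, and Lemma~\ref{lem:EVinCn} shows that for $|n|>K(\varepsilon)$ each such circle carries exactly one eigenvalue of~$B$, necessarily simple since its Riesz projector has rank one. Letting $\varepsilon\to0$ along a sequence gives (i). For the enumeration in~(ii), one uses the one-to-one correspondence for $|n|>K(\varepsilon)$: label the unique eigenvalue of~$B$ inside $C_n(\varepsilon)$ as $\mu_n$; since $\varepsilon$ can be taken arbitrarily small (with $K(\varepsilon)$ correspondingly larger), $|\mu_n-\lambda_n|<\varepsilon$ for all sufficiently large $|n|$, hence $\mu_n-\lambda_n\to0$. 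The finitely many remaining eigenvalues of~$B$ (inside the rectangle, counted with algebraic multiplicity) are assigned the finitely many leftover indices arbitrarily; here one may invoke the counting remark preceding the theorem, or simply note that the total multiplicity bookkeeping is a finite matter once the asymptotic regime is under control.

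Assertion~(iii) is an immediate restatement of Lemma~\ref{lem:geom-mult}: a geometrically non-simple eigenvalue must be some $\lambda_n$ with $a_n=b_n=0$, in which case the geometric multiplicity equals~$2$ and, by Remark~\ref{rem:multiplicity}, the eigenspace $\langle v_n\rangle$ of~$A$ is reducing for~$B$. The final existence claim is exactly Lemma~\ref{lem:nonrealEV}: given pairwise distinct $z_1,\dots,z_n$ and multiplicities $m_1,\dots,m_n$, one constructs $\varphi$ and $\psi$ supported on the first $N=m_1+\dots+m_n$ eigenvectors so that the characteristic function~$F$ has a zero of order~$m_j$ at each $z_j$, the solvability of the resulting linear system being guaranteed by the non-singularity proved in Lemma~\ref{lem:Cauchy}; for $z_j$ that happen to lie in $\sigma(A)$ one argues as in Lemma~\ref{lem:alg-mult0} and Example~\ref{ex:multiple-EV}.

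I do not expect a serious obstacle, since every ingredient has been proved; the only point requiring a little care is the bookkeeping in~(ii)—making sure that "the eigenvalues of~$B$ enumerated with multiplicity" really do split as (asymptotically matched simple ones) plus (finitely many in a bounded region), which relies on $B$ having discrete spectrum (Corollary~\ref{cr:krein}) together with the localization Lemmata~\ref{lem:RN} and~\ref{lem:EVinCn}. I would state this carefully and, for the precise matching of indices, cite the subsequent paper~\cite{DobHry20} as already flagged in the remark, rather than reprove the equal-counting statement here.
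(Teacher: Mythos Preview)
Your proposal is correct and matches the paper's approach exactly: the theorem is presented there as a summary of the preceding lemmata (Lemmata~\ref{lem:RN}, \ref{lem:EVinCn}, \ref{lem:nonrealEV}, \ref{lem:Cauchy}, Lemma~\ref{lem:geom-mult} with Remark~\ref{rem:multiplicity}, and the remark invoking~\cite{DobHry20} for the index-matching in~(ii)), with no additional argument supplied. Your bookkeeping for~(ii), including the explicit deferral of the equal-counting statement to~\cite{DobHry20}, is precisely how the paper handles it.
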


\section{Finite-dimensional case}\label{sec:finite-dim}

The analysis of Section~\ref{sec:mult} allows to essentially complement the results in the fi\-ni\-te-di\-men\-si\-o\-nal case. Namely, assume that $A$ is a Hermitian matrix in $\bC^n$ with pairwise distinct eigenvalues $\lambda_1, \lambda_2, \dots, \lambda_n$ and normalized (column) eigenvectors~$\mathbf{v}_1, \mathbf{v}_2, \dots, \mathbf{v}_n$ and define the \emph{generic set} $\mathcal{G}(A)$ of $A$ as
\[
	\mathcal{G}(A) = \{ \mathbf{x} \in \mathbb{C}^n  \mid \langle \mathbf{x}, \mathbf{v}_k\rangle_{\bC^n} \ne 0, \quad k = 1, 2, \dots,n  \}. 
\]
Then we have the following generalization of the result of~\cite{Kru92}.

\begin{theorem}\label{thm:finite-dim}
	Under the above assumptions, let $\bm{\varphi}$ be a vector from the generic set~$\mathcal{G}(A)$. Then for any natural number $k$, any pairwise distinct complex numbers~$z_1, z_2, \dots, z_k$, and any natural numbers $m_1, m_2, \dots, m_k$ satisfying $m_1+ m_2 + \dots + m_k =n$, there is a unique vector $\bm{\psi} \in \bC^n$ such that the rank-one perturbation  $B = A + \bm{\psi}\bm{\varphi}^\top$ of the matrix~$A$ has eigenvalues $z_1, z_2, \dots, z_k$ of corresponding multiplicities $m_1, m_2, \dots, m_k$. 
	
	Similarly, for every fixed $\bm{\psi} \in \mathcal{G}(A)$ there is a unique $\bm{\varphi}\in \bC^n$ such that $B$ has the eigenvalues $z_j$ of prescribed multiplicities $m_j$, $j = 1,2, \dots, k$. 
\end{theorem}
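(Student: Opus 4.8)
The plan is to reduce the whole statement to the solvability and unique solvability of the linear system~\eqref{eq:system}, together with the observation that the prescribed Fourier coefficients may be split between $\bm\varphi$ and $\bm\psi$ in a prescribed way. Concretely, write everything in the orthonormal eigenbasis $\mathbf v_1,\dots,\mathbf v_n$ of $A$, and let $a_k=\langle\bm\varphi,\mathbf v_k\rangle$, $b_k=\langle\bm\psi,\mathbf v_k\rangle$ be the Fourier coefficients, so that $B=A+\langle\,\cdot\,,\bm\varphi\rangle\bm\psi$ has characteristic function $F(z)=\sum_{k=1}^n \overline{a_k}b_k/(\lambda_k-z)+1$ as in~\eqref{eq:F-new}. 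Since the $z_j$ are required to be eigenvalues of $B$ of total algebraic multiplicity $\sum m_j=n$, and since by Lemmata~\ref{lem:eig-B}, \ref{lem:alg-mult} and \ref{lem:alg-mult0} the algebraic multiplicity of each $z_j$ (whether or not $z_j\in\sigma(A)$) is exactly its multiplicity as a zero of $F$ plus possibly one, while $F=P/Q$ is rational with $\deg P,\deg Q\le n$, the requirement forces $F$ to have $z_1,\dots,z_k$ as its only zeros, with multiplicities $m_1,\dots,m_k$ (resp.\ one less, in the $z_j\in\sigma(A)$ case). First I would treat the generic case where no $z_j$ lies in $\sigma(A)$: then the conditions $F^{(m)}(z_j)=0$ for $m=0,\dots,m_j-1$ are exactly the linear system~\eqref{eq:system} in the unknowns $c_k:=\overline{a_k}b_k$, and by Lemma~\ref{lem:Cauchy} its coefficient matrix is non-singular, so there is a \emph{unique} vector $(c_1,\dots,c_n)$ of products.

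Next I would use the hypothesis $\bm\varphi\in\mathcal G(A)$, i.e.\ $a_k\ne0$ for all $k$. This is precisely what makes the passage from the products $c_k$ to the individual coefficients $b_k$ both possible and unique: set $b_k:=c_k/\overline{a_k}$, which is well defined because $a_k\ne0$, and let $\bm\psi:=\sum_k b_k\mathbf v_k$. Then $\overline{a_k}b_k=c_k$ for every $k$, so $B=A+\langle\,\cdot\,,\bm\varphi\rangle\bm\psi$ has the prescribed characteristic function and hence, by the multiplicity lemmata above, the prescribed spectrum with the prescribed multiplicities. Uniqueness of $\bm\psi$ follows since $\bm\psi$ is determined by its coefficients $b_k$, which are determined by $c_k=\overline{a_k}b_k$ and by $a_k\ne0$; any other $\bm\psi'$ giving the same spectrum would yield the same $F$, hence the same products $c_k$ by uniqueness of the solution of~\eqref{eq:system}, hence the same $b_k$. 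The symmetric statement (fix $\bm\psi\in\mathcal G(A)$, solve for $\bm\varphi$) is handled identically, setting $a_k:=\overline{c_k/b_k}$; equivalently one notes that $B^\top$ (or $B^*$) is the rank-one perturbation $A+\langle\,\cdot\,,\overline{\bm\psi}\rangle\overline{\bm\varphi}$ of $A$ with the roles of $\bm\varphi$ and $\bm\psi$ interchanged and with the conjugated spectrum, and applies the first half.

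The one genuinely delicate point — and the step I expect to be the main obstacle in writing the argument carefully — is the bookkeeping when some of the $z_j$ \emph{do} lie in $\sigma(A)$, say $z_j=\lambda_{p(j)}$. Then by Lemma~\ref{lem:alg-mult0} the algebraic multiplicity of $z_j$ as an eigenvalue of $B$ is one more than its multiplicity as a zero of the \emph{extended} $F$, provided $\lambda_{p(j)}\in\sigma_0(A)$, i.e.\ provided $a_{p(j)}b_{p(j)}=0$; since $\bm\varphi\in\mathcal G(A)$ forces $a_{p(j)}\ne0$, this means $b_{p(j)}=0$, i.e.\ $c_{p(j)}=0$, and one must check that this is consistent with and forced by the modified interpolation conditions on $F$ (now $F^{(m)}(z_j)=0$ for $m=0,\dots,m_j-2$, together with $c_{p(j)}=0$). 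The count still works out — each such $z_j$ contributes $m_j-1$ zero-derivative conditions plus the one equation $c_{p(j)}=0$, for a total of $\sum m_j=n$ linear conditions on the $n$ unknowns $c_1,\dots,c_n$ — and the resulting coefficient matrix is again non-singular by a minor variant of Lemma~\ref{lem:Cauchy} (the rows $e_{p(j)}^\top$ replacing the "$m=1$" Cauchy rows at the confluent nodes $\lambda_{p(j)}$, which only simplifies the determinant computation). Having established non-singularity in this mixed case, the rest of the argument — dividing by $\overline{a_k}\ne0$ to recover a unique $\bm\psi$ — goes through verbatim. I would note that the paper itself flags this as "changes... not very significant" in the proof of Lemma~\ref{lem:nonrealEV}, so a short remark pointing to that discussion, plus the explicit confluent-node variant of Lemma~\ref{lem:Cauchy}, should suffice.
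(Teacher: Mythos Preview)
Your proposal is correct and your uniqueness discussion is actually more explicit than the paper's; the overall strategy---reduce to determining the products $c_k=\overline{a_k}b_k$, then divide by $\overline{a_k}\ne0$---is exactly the paper's. The one genuine methodological difference is how the $c_k$ are found. You set up the interpolation system~\eqref{eq:system} and invoke Lemma~\ref{lem:Cauchy} for non-singularity, then in the mixed case (some $z_j\in\sigma(A)$) you need a variant of that lemma with certain Cauchy rows replaced by unit-vector rows $e_{p(j)}^\top$. The paper bypasses the linear system altogether: it writes down $F$ directly in factored form,
\[
    F(z)=\frac{\prod_{j=1}^k(z-z_j)^{m'_j}}{\prod_{j\in I_1}(z-\lambda_j)},
\]
with $m'_j=m_j-1$ precisely when $z_j\in\sigma(A)$ and $m'_j=m_j$ otherwise, checks that the degree count $\sum m'_j=|I_1|$ makes this a monic rational function tending to~$1$ at infinity, and then simply reads off $c_j$ as (minus) the residue at $\lambda_j$. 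This handles the generic and mixed cases uniformly and makes both existence and uniqueness of the $c_j$ immediate, so no confluent-Cauchy variant is needed. Your route works, but the direct rational-function construction is shorter and is what you should expect a referee to point you to.
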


\begin{proof}
Denote by $\sigma_0(A)$ the common part of the spectrum $\sigma(A)$ of $A$ and the set $\{z_1, z_2, \dots, z_k\}$, by $\sigma_1(A):=\sigma(A) \setminus\sigma_0(A)$ the remaining part of $\sigma(A)$, and let $I_\ell := \{ j \mid \lambda_j \in \sigma_p(A)\}$, $\ell=0,1$, be the corresponding index sets. We update the multiplicities $m_j$ to 
\begin{equation}\label{eq:reduce-mult}
	m'_j := \begin{cases}
		m_j - 1, & \qquad z_j \in \sigma(A); \\
		m_j, & \qquad z_j \not\in \sigma(A);	
	\end{cases}
\end{equation}
and set
\begin{equation}\label{eq:F-prod}
	F(z):= \frac{\prod_{j=1}^k (z - z_j)^{m'_j}}{\prod_{j \in I_1} (z - \lambda_j)}.
\end{equation}
Denoting by $-c_j$ the residue of the function~$F$ at the point $z=\lambda_j$, $j\in I_1$, we conclude that $F$ can be written in the form
\begin{equation}\label{eq:F-sum}
	F(z) = \sum_{j \in I_1} \frac{c_j}{\lambda_j-z} + 1. 
\end{equation}
Denote by $a_j = \langle \bm{\varphi}, \mathbf{v}_k\rangle_{\bC^n}$, $j= 1,2, \dots, n$, the coefficients of the vector $\bm{\varphi}$ in the basis $\mathbf{v}_1, \mathbf{v}_2, \dots, \mathbf{v}_n$. By assumption, no $a_j$ vanishes, and we set $b_j:= c_j / \overline{a_j}$ for $j \in I_1$ and $b_j = 0$ for $j \in I_0$, and define the vector $\psi$ via
\[
	\bm{\psi}  = \sum_{j=1}^n b_j \mathbf{v}_j = \sum_{j \in I_1} b_j \mathbf{v}_j. 
\]  
It follows from the results of Section~\ref{sec:mult} that the characteristic function of the matrix $B = A + \bm{\psi}\bm{\varphi}^\top$ coincides with the above function~$F$; therefore, the matrix $B$ has eigenvalues $z_1, z_2, \dots, z_k$ and the multiplicity of the eigenvalue $z_j$ is $m_j'$ if $z_j \not \in \sigma(A)$ or $m_j'+1$ otherwise. 

The second part is proved in a similar manner, by interchanging the roles of $a_n$ and $b_n$.
\end{proof}

If the vector $\bm{\varphi}$ is not in the generic set~$\mathcal{G}(A)$ of $A$, the above theorem has the following analogue. 

\begin{theorem}\label{thm:phi-arbitrary}
	Under the above assumptions on the matrix~$A$, take a nonzero vector $\bm{\varphi} = \sum _{j=1}^n a_j \mathbf{v}_j \in \bC^n$ and set $I_0 := \{j \mid a_j = 0\}$ and $\sigma_0(A):=\{\lambda_j \mid j \in I_0\}$. Then for every natural $k$, every set $S=\{z_1, z_2,\dots, z_k\}$ of $k$ pairwise distinct complex numbers obeying $S \cap \sigma(A) = \sigma_0(A)$, and every sequence $m_1, m_2, \dots, m_k$ of natural numbers with $m_1 + m_2 + \dots + m_k = n$ there is a vector $\bm{\psi}\in\bC^n$ such that the matrix $B = A + \bm{\psi}\bm{\varphi}^\top$ has eigenvalues $z_1, z_2, \dots, z_k$ of multiplicities $m_1, m_2 , \dots, m_k$ respectively. 
	
	A similar statement holds with the r\^oles of $\bm{\varphi}$ and $\bm{\psi}$ interchanged.
\end{theorem}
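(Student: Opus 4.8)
The plan is to reduce the assertion to the already-established generic case, Theorem~\ref{thm:finite-dim}. Write $\bm{\varphi} = \sum_{j=1}^n a_j\mathbf{v}_j$, put $I_1 := \{1,\dots,n\}\setminus I_0 = \{j\mid a_j\neq0\}$ and $\sigma_1(A) := \{\lambda_j\mid j\in I_1\}$; the standing hypothesis $S\cap\sigma(A)=\sigma_0(A)$ says exactly that every $\lambda_j$ with $j\in I_0$ equals one of the $z_i$ and that no $z_i$ lies in $\sigma_1(A)$. Since $\bm{\varphi}\neq0$ the set $I_1$ is nonempty; if $I_0=\emptyset$ the claim is the existence part of Theorem~\ref{thm:finite-dim}, so I may assume $I_0\neq\emptyset$. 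Split $\bC^n = H^0\oplus H^1$ with $H^\ell := \operatorname{span}\{\mathbf{v}_j\mid j\in I_\ell\}$; then $\bm{\varphi}\in H^1$, the restriction $A|_{H^1}$ is a Hermitian matrix with simple spectrum $\sigma_1(A)$ and eigenvectors $\{\mathbf{v}_j\}_{j\in I_1}$, and $\bm{\varphi}$ lies in its generic set $\mathcal{G}(A|_{H^1})$ because every coefficient $a_j$, $j\in I_1$, is nonzero.

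Next, define the reduced multiplicities $m'_i$ by formula~\eqref{eq:reduce-mult} (so $m'_i = m_i-1$ if $z_i\in\sigma(A)$ and $m'_i=m_i$ otherwise). Since the $z_i$ are pairwise distinct and $\{z_i\mid z_i\in\sigma(A)\}=\sigma_0(A)$, there are exactly $|I_0|$ indices $i$ with $z_i\in\sigma(A)$, hence $\sum_{i} m'_i = n-|I_0| = |I_1| = \dim H^1$; moreover the sublist of those $z_i$ with $m'_i\ge1$ consists of pairwise distinct complex numbers with multiplicities $m'_i$ summing to $\dim H^1$. Applying Theorem~\ref{thm:finite-dim} on $H^1$ to the matrix $A|_{H^1}$ and the vector $\bm{\varphi}\in\mathcal{G}(A|_{H^1})$ produces a vector $\bm{\psi}'\in H^1$ such that $B_1 := A|_{H^1}+\bm{\psi}'\bm{\varphi}^\top$ has eigenvalues $z_i$ of algebraic multiplicity $m'_i$ (for those $i$ with $m'_i\ge1$).

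Now take $\bm{\psi} := \bm{\psi}'$, viewed as a vector of $\bC^n$ with vanishing $H^0$-component; it is nonzero. Since $\bm{\varphi}$ is orthogonal to $H^0$ and $\bm{\psi}\in H^1$, the subspace $H^0$ is reducing for $B := A+\bm{\psi}\bm{\varphi}^\top$ by (the reasoning of) Remark~\ref{rem:multiplicity}, with $B|_{H^0}=A|_{H^0}$, while $B|_{H^1}=A|_{H^1}+\bm{\psi}'\bm{\varphi}^\top = B_1$. Hence, counting algebraic multiplicity, the spectrum of $B$ is the union with multiplicities of those of $A|_{H^0}$ and of $B_1$. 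The first is $\{\lambda_j\mid j\in I_0\} = \{z_i\mid z_i\in\sigma(A)\}$ with every point simple; the second is $\{z_i\mid m'_i\ge1\}$ with $z_i$ of multiplicity $m'_i$. Therefore each $z_i\in\sigma(A)$ occurs in $\sigma(B)$ with total multiplicity $1+(m_i-1)=m_i$, each $z_i\notin\sigma(A)$ with multiplicity $m'_i=m_i$, and these exhaust $\sigma(B)$ (a $z_i\notin\sigma(A)$ has $m'_i=m_i\ge1$), so $B$ has exactly the eigenvalues $z_1,\dots,z_k$ of multiplicities $m_1,\dots,m_k$. The statement with $\bm{\varphi}$ and $\bm{\psi}$ interchanged follows by the same argument, invoking the second assertion of Theorem~\ref{thm:finite-dim} on $H^1$.

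The only genuinely new feature compared with Theorem~\ref{thm:finite-dim} is that the indices $j\in I_0$ force $a_j=b_j=0$, so that a common eigenvalue $\lambda_j=z_i$ can acquire geometric multiplicity two whenever $m_i\ge2$ (Lemma~\ref{lem:geom-mult}), a situation not covered by Lemma~\ref{lem:alg-mult0}. The direct-sum decomposition of Remark~\ref{rem:multiplicity} is what circumvents this: it peels off the trivial summand $A|_{H^0}$ on which $B$ acts as $A$, leaving a genuinely generic rank-one perturbation on $H^1$, so that the multiplicity bookkeeping collapses to the already-proved Theorem~\ref{thm:finite-dim}. Everything else — the reindexing, the identity $\sum_i m'_i=\dim H^1$, and the invariance of $H^0$ and $H^1$ under $B$ — is routine.
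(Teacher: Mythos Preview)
Your proof is correct but takes a genuinely different route from the paper's. Both arguments begin by introducing the reduced multiplicities $m'_i$ of~\eqref{eq:reduce-mult}, but then diverge. The paper re-runs the construction from Theorem~\ref{thm:finite-dim}: it writes down the rational function~$F$ of~\eqref{eq:F-prod} explicitly, reads off its residues to determine $b_j$ for $j\in I_1$, leaves $b_j$ for $j\in I_0$ \emph{arbitrary}, and then invokes Lemmata~\ref{lem:alg-mult} and~\ref{lem:alg-mult0} to verify the multiplicities at points of $\sigma_0(A)$. You instead fix $b_j=0$ for $j\in I_0$, which makes $H^0$ and $H^1$ reducing for~$B$; the problem then splits cleanly into $B|_{H^0}=A|_{H^0}$ (trivial) and a genuinely generic rank-one perturbation on~$H^1$, to which Theorem~\ref{thm:finite-dim} applies as a black box. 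Your argument is more structural and sidesteps the delicate case analysis of Lemma~\ref{lem:alg-mult0}; the paper's route, by keeping $b_j$ free on~$I_0$, yields as a by-product the observation (recorded in the Remark following the theorem) that the admissible vectors~$\bm{\psi}$ form an $|I_0|$-dimensional affine family, something your specific choice $\bm{\psi}\in H^1$ does not directly display.
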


\begin{proof}
	The fact that the set $S$ is in the spectrum of~$B$ is proved in Lemma~\ref{lem:eig-B}. We denote by~$\sigma_1(A)$ the spectrum of~$A$ not in $\sigma_0(A)$ and set $I_1$ to be the corresponding set of indices. Reducing by $1$ the multiplicity of each $z_j$ from $S$ and denoting the resulting multiplicities by $m'_j$ as in~\eqref{eq:reduce-mult}, we construct the function~$F$ of~\eqref{eq:F-prod} and observe that it assumes the form~\eqref{eq:F-sum}, with uniquely determined residues~$-c_j$, $j\in I_1$. Then we define $b_j$ for such $j$ from the relation~$\overline{a}_jb_j = c_j$, and fix arbitrarily $b_j$ for $j \in I_0$. 
	
	By Lemmata~\ref{lem:alg-mult} and \ref{lem:alg-mult0}, the numbers $z_j$ not in $\sigma_0(A)$ are eigenvalues of the matrix~$B$ of multiplicity~$m_j'$, while those in~$\sigma_0(A)$ have multiplicity $m_j'+1$. The proof is complete.
\end{proof}	

\begin{remark}
	We can conclude from the above proof that the coordinates of the vector $\bm{\psi}$ in the basis $\mathbf{v}_1, \mathbf{v}_2, \dots, \mathbf{v}_n$ for $j\in I_0$ are not fixed; therefore, there is an $|I_0|$-dimensional affine set of such vectors producing the required spectrum. 
\end{remark}

\section{Concluding remarks}

It should be noted that some restrictions imposed on~$A$ can be relaxed. For instance, self-adjointness of~$A$ is not essential; the proof with minor amendments will work for rank-one perturbations of every normal operator with simple discrete spectrum, or even in the case when the eigenvectors of~$A$ can be chosen to form a Riesz basis of~$H$. Simplicity of the eigenvalues of~$A$ can also be dropped; however, this will result in a more complicated Jordan structure of the root subspaces of~$B$, cf.~\cite{BehMoeTru14}. Also, the operator $A$ may possess, in addition to an infinite discrete spectrum, a non-trivial essential component; the results we proved have natural generalization to this case as well. 

Finally, this study has found its continuation in~\cite{DobHry20}, in which a complete characterization of all possible spectra of rank-one perturbations~\eqref{eq:B} of self-adjoint operators~$A$ with simple discrete spectrum is given. 


\end{document}